\title[Boundary vorticity of
incompressible 2D flows]{
Boundary vorticity of
incompressible 2D flows}
\author{Giovanni Franzina}
\numberwithin{equation}{section}
\newtheorem{theorem}{Theorem}
\newtheorem{proposition}{Proposition}
\newtheorem{lemma}{Lemma}
\theoremstyle{definition}
\newtheorem{rmk}{Remark}
\address[G. Franzina]{\quad\newline
\indent Istituto per le Applicazioni del Calcolo ``M. Picone''
\newline\indent
Consiglio Nazionale delle Ricerche
\newline\indent 
Via dei Taurini, 19, 00185 Roma, Italy}
\subjclass[MSC 2010 Subject Classification]{35P15, 47A75, 49R05, 76D07}
\keywords{Buckling load, Shape optimization problems, Stokes flows, Isoperimetric inequalities}
\begin{document}
\maketitle

\begin{abstract}
For a homogeneous incompressible 2D fluid
confined within a
bounded Lipschitz simply connected domain, homogeneous
Neumann pressure boundary conditions are equivalent to a constant boundary vorticity. We investigate the rigidity of such conditions.

\end{abstract}

\section{Introduction}
We consider the two-dimensional Navier-Stokes equations
for the velocity $v=(v^1,v^2)$ of
the plane-parallel flow of a
homogeneous incompressible fluid
within static rigid walls, viz.
\begin{equation}
\label{4}
\begin{cases}
\partial_t v + (v\cdot\nabla)v = 
\nu\Delta v+f-\nabla p
 \,,&\qquad\text{in $\Omega\times(0,+\infty)$,}\\
\nabla\cdot v=0\,,&\qquad\text{in $\Omega\times(0,+\infty)$,}\\
v=0\,,&\qquad\text{on $\partial\Omega\times(0,+\infty)$.}
\end{cases}
\end{equation}
An external force with divergence-free density  $f$ is applied, which vanishes at the boundary.
The kinematic viscosity (divided by Reynolds number)
$\nu$ is positive, hence the no-slip condition, dictated by the last equation,
makes sense at the boundary of $\Omega$. The latter, throughout this paper,  is assumed to be a Lipschitz planar open set of finite area.

Against this backdrop, the pressure $p$ arises in
enforcing the incompressibility constraint, expressed as in
the second equation of \eqref{4}. In view of that, when solving the first one for $p$, we arrive at Poisson equation
$-\Delta p = \nabla\cdot(u\cdot\nabla u)$. 
If the boundary is smooth and prevents any penetration, 
that comes automatically with the condition
$\frac{\partial p}{\partial n} =
\kappa (v\cdot\tau)^2 - \nu \partial_\tau (\nabla\times v)$
for the pressure gradient in the outward normal direction $n$. Here,
 $\kappa $ is the boundary
 curvature 
 and $\partial_\tau$ is the derivative
along the direction $\tau$ parallel to the contour, followed counterclockwise~\cite{Te}.
For positive viscosities,
the tangential derivative of the vorticity, in general,
does contribute to the right hand side, whereas
 the term involving curvature always vanishes for velocities subject to the no-slip condition.

In some special geometries, 
the Neumann boundary data for the pressure gradient
can be calculated explicitly, though. For instance, alongside to the boundaries
of an infinite horizontal straight channel the vertical component of the pressure gradient must vanish. (An additional condition at infinity must supplement the problem in order to make
it well posed, in this case.) Also, it turns out that
a homogeneous Neumann-type condition must hold at the boundaries of a circular domain.

With these two examples in mind, we are led to raise the question: {\em how rigid is the condition that the component of $\nabla p$ normal to the boundary of $\Omega$ vanishes identically?} What seems
natural is the guess that {\em the disc is the only bounded
simply connected domain on which the pressure satisfies homogeneous Neumann boundary conditions.} 
As said, for the incompressible 2D Navier-Stokes system \eqref{4}, in a smooth bounded domain, an equivalent question is how strict is the requirement that the vorticity is constant on the walls.

The present manuscript does not give an answer in the full generality 
of equations \eqref{4}. In order to address this issue with some partial results, we start by investigating the matter
in the simplified set-up of their formal low Reynolds number
limit, i.e., the Stokes problem
\vskip-.4cm
\begin{subequations}\label{Re-zero-over}
\begin{equation}
\label{Re-zero}
\begin{cases}
\nu\Delta u+f=\nabla p \,,&\qquad\text{in $\Omega$,}\\
\nabla\cdot u=0\,,&\qquad\text{in $\Omega$,}\\
u=0\,,&\qquad\text{on $\partial\Omega$.}
\end{cases}
\end{equation}
An overdermination occurs, in this context, by imposing the additional condition that
\begin{equation}\label{Re-zero-plus}
	\nabla p =0 \,,\qquad \text{in $\Omega$.}
\end{equation}
\end{subequations}
Recall that the pressure
in \eqref{4} scales linearly with the inverse Reynolds number, and the function $p$ in \eqref{Re-zero} is harmonic in $\Omega$. Hence, for \eqref{Re-zero}, Neumann pressure
boundary conditions are equivalent to \eqref{Re-zero-plus}.
Then, we consider  the constrained variational system
\vskip-.3cm
\begin{subequations}\label{vortex}
\begin{equation}\label{vortex-1}
\begin{cases}
\nu\Delta w + \nabla\times f =0 \,,\qquad\text{in
$\Omega$,}\\
\displaystyle
\int_\Omega w \,h\,dx =0\,,\qquad
\text{for all $h\in H^1(\Omega)$, with $\Delta h=0$.}
\end{cases}
\end{equation}
We will sometimes endow \eqref{vortex-1} with the additional condition for the boundary vorticity
\begin{equation}\label{vortex-2}
\nu\,w= {\rm const},\qquad \text{on $\partial\Omega$.}
\end{equation}
\end{subequations}
Given $\varphi\in H_0^1(\Omega)$, we  also consider 
the fourth-order problem
\begin{subequations}
\label{corrente}
\begin{equation}
\label{corrente-1}
\begin{cases}
\nu\Delta^2 \psi+\Delta\varphi=0 \,,&\qquad\text{in $\Omega$,}\\
\psi=0\,,&\qquad\text{on $\partial\Omega$,}\\
\frac{\partial\psi}{\partial n}=0\,,&\qquad\text{on $\partial\Omega$.}
\end{cases}
\end{equation}
We shall sometimes focus on solutions of \eqref{corrente-1} subject to the additional condition
\begin{equation}
\label{corrente-2}
\nu\,\Delta \psi = {\rm const},\qquad\text{on $\partial\Omega$.}
\end{equation}
\end{subequations}
 
 \begin{subequations}\label{subASS}
As long as it is assumed to be constant, 
 the viscosity $\nu$ may be removed both from 
 \eqref{vortex-2} and from \eqref{corrente-2}.
We begin by commenting about the
relation between these boundary value problems.
For the vector field $f=(f^1,f^2)\in L^2(\Omega\mathbin{;}\mathbb R^2)$ in \eqref{Re-zero}, we assume that
\begin{equation}
\label{fASS}
\nabla\cdot f=0\,,\qquad
 \partial_{x_1}f^2-\partial_{x_2}f^1 \in H^{-1}(\Omega)\,,\qquad
 \text{$f\cdot n=0$ on $\partial\Omega$}.
\end{equation}
Thanks to the first equation in \eqref{fASS},
the normal trace of $f$ at the boundary is well-defined in a weak sense and the last condition in \eqref{fASS} makes sense. As for the scalar function $\varphi$ in \eqref{corrente-1}, owing to the second requirement in \eqref{fASS}
it is possible to find it such that
\begin{equation}
\label{varphiASS}
\varphi\in H^1_0(\Omega)\quad \text{with }\quad \Delta\varphi+\nabla\times f=0\,.
\end{equation}
\end{subequations}

\begin{proposition}\label{prop0.1}
Let $f$ and $\varphi$ be as in \eqref{subASS}.
If
 $w\in H^1(\Omega)$ is a solution of
 \eqref{vortex-1} and we define $\psi\in H^1_0(\Omega)$
 by requiring that $\Delta \psi =w$, then $\psi\in H^2_0(\Omega)$ and $\psi$
 is a solution of \eqref{corrente-1}. Conversely,
 if $\psi\in H^2_0(\Omega)$ is a solution of
 \eqref{corrente-1} and we set $w= \Delta \psi$, then
 $w\in H^1(\Omega)$ and $w$ is a solution
 of \eqref{vortex-1}. If in addition $\Omega $ is simply
 connected, then,
given a solution  $\psi\in H^2_0(\Omega)$  of \eqref{corrente-1}, the function
 \begin{equation}
 \label{h}
 h=\nu \Delta\psi+\varphi
 \end{equation}
 is harmonic; moreover,
 \begin{equation}
 \label{perp}
 u=-\nabla^\perp \psi = (\partial_{x_2}\psi\mathbin,-\partial_{x_1}\psi)
 \end{equation}
 defines an element of $H^1_0(\Omega;\mathbb R^2)$ which solves \eqref{Re-zero}. The converse also holds: given
 a solution $u\in H^1_0(\Omega;\mathbb R^2)$ of
 \eqref{Re-zero} we can find $\psi\in H^2_0(\Omega)$,
 with \eqref{perp}, that solves \eqref{corrente-1} and is such that \eqref{perp} is harmonic; in this case 
 \begin{equation}
 \label{CR}
 	\nabla p = -\nabla^\perp h\,.
 \end{equation}
\end{proposition}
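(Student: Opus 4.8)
The proof has two essentially independent halves: the equivalence of the vorticity problem \eqref{vortex-1} and the stream-function problem \eqref{corrente-1}, which requires no topological assumption, and their identification with the Stokes system \eqref{Re-zero}, for which simple connectedness is used.

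\emph{Equivalence of \eqref{vortex-1} and \eqref{corrente-1}.} The plan is to pass between $w$ and $\psi$ through $\Delta\psi=w$. Given $w\in H^1(\Omega)$ solving \eqref{vortex-1}, let $\psi\in H^1_0(\Omega)$ be the unique solution of $\Delta\psi=w$; the substantive point is that $\psi$ in fact lies in $H^2_0(\Omega)$, which on a merely Lipschitz domain is \emph{not} forced by $\Delta\psi=w\in L^2(\Omega)$ alone. I would use the orthogonal decomposition $L^2(\Omega)=\Delta\bigl(H^2_0(\Omega)\bigr)\oplus\{q\in L^2(\Omega):\Delta q=0\}$: the two summands are orthogonal by Green's identity (legitimate because $C^\infty_c(\Omega)$ is dense in $H^2_0(\Omega)$), and $\Delta(H^2_0(\Omega))$ is closed since $\|\Delta v\|_{L^2(\Omega)}=\|D^2v\|_{L^2(\Omega)}\ge c\,\|v\|_{H^2(\Omega)}$ on $H^2_0(\Omega)$ for some $c>0$ (this is where finiteness of $|\Omega|$ enters, through Poincar\'e). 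Since \eqref{vortex-1} tests $w$ only against \emph{$H^1$}-harmonic functions, I must also show that these are $L^2$-dense among all $L^2$-harmonic functions, for which I would use the Newtonian potential: if an $L^2$-harmonic $g$ were orthogonal to every function harmonic near $\overline\Omega$, then --- because for $x\notin\overline\Omega$ the kernel $y\mapsto E(x-y)$ is such a function --- the Newtonian potential of $g$ (extended by $0$) would vanish on $\mathbb R^2\setminus\overline\Omega$, and being in $H^2_{\mathrm{loc}}(\mathbb R^2)$ it would restrict to an element of $H^2_0(\Omega)$ with Laplacian $g$, whence $g\perp g$ by the decomposition and $g=0$. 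Thus $w\perp\{q\in L^2(\Omega):\Delta q=0\}$, so $w=\Delta\widetilde\psi$ for some $\widetilde\psi\in H^2_0(\Omega)$, and uniqueness of the Dirichlet problem forces $\psi=\widetilde\psi\in H^2_0(\Omega)$; rewriting the equation of \eqref{vortex-1} with $w=\Delta\psi$ and eliminating $\nabla\times f$ via \eqref{varphiASS} gives the first line of \eqref{corrente-1}, while the two boundary conditions are exactly $\psi\in H^2_0(\Omega)$. The converse is easier: for $\psi\in H^2_0(\Omega)$ solving \eqref{corrente-1}, set $w:=\Delta\psi\in L^2(\Omega)$; Green's identity yields $\int_\Omega w\,h=0$ for all $L^2$-harmonic $h$ --- in particular the orthogonality of \eqref{vortex-1} --- the PDE of \eqref{vortex-1} is the first line of \eqref{corrente-1} read through \eqref{varphiASS}, and $w\in H^1(\Omega)$ by elliptic regularity. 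Finally the first line of \eqref{corrente-1} is literally $\Delta(\nu\Delta\psi+\varphi)=0$, so $h$ in \eqref{h} is harmonic with no extra hypothesis.

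\emph{Identification with \eqref{Re-zero}.} Now assume $\Omega$ simply connected. I would invoke the classical stream-function correspondence on bounded simply connected Lipschitz domains: the divergence-free fields in $H^1_0(\Omega;\mathbb R^2)$ are exactly the $-\nabla^\perp\psi$ with $\psi\in H^2_0(\Omega)$ (unique after normalising the additive constant), and the divergence-free $L^2$ fields with vanishing normal trace are the $-\nabla^\perp$ of $H^1$ functions constant on $\partial\Omega$. Given $\psi\in H^2_0(\Omega)$ solving \eqref{corrente-1}, the field $u=-\nabla^\perp\psi$ lies in $H^1_0(\Omega;\mathbb R^2)$, is divergence-free and satisfies the no-slip condition, so to land in \eqref{Re-zero} it suffices that $\nu\Delta u+f$ be curl-free; since $\nabla\times$ commutes with $\Delta$ and with $\nabla^\perp$, a one-line computation turns $\nabla\times(\nu\Delta u+f)$ into (a multiple of) the first line of \eqref{corrente-1} read via \eqref{varphiASS}, hence into $0$, and a curl-free $H^{-1}$ field on a simply connected domain is a gradient, so $\nu\Delta u+f=\nabla p$. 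Conversely, given $u\in H^1_0(\Omega;\mathbb R^2)$ solving \eqref{Re-zero}, let $\psi\in H^2_0(\Omega)$ be its normalised stream function (normalising being possible since $\partial\Omega$ is connected); taking the curl of the momentum equation produces the first line of \eqref{corrente-1}, so $h$ is harmonic as above, and writing $f=-\nabla^\perp\varphi$ --- legitimate because by \eqref{fASS} $f$ is divergence-free with vanishing normal trace and its normalised potential is $\varphi$ by \eqref{varphiASS} --- together with $\nu\Delta u=-\nu\nabla^\perp(\Delta\psi)$ gives $\nabla p=\nu\Delta u+f=-\nabla^\perp(\nu\Delta\psi+\varphi)=-\nabla^\perp h$, which is \eqref{CR}.

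The main obstacle is the $H^2_0$-regularity of $\psi$ in the first direction: the condition $\partial\psi/\partial n=0$ is invisible from $\Delta\psi=w$, and it is precisely the orthogonality built into \eqref{vortex-1} --- upgraded, through the $L^2$-density of $H^1$-harmonic functions, to orthogonality against all $L^2$-harmonic functions --- that suppresses the possible corner singularities and puts $\psi$ into $H^2_0(\Omega)$. Everything else is either elementary vector-calculus identities or standard facts about simply connected Lipschitz domains.
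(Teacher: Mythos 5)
Your proof is correct and reaches every conclusion of the proposition, but the mechanism you use for the one genuinely delicate step --- showing that the $H^1_0(\Omega)$ solution of $\Delta\psi=w$ lies in $H^2_0(\Omega)$ --- is different from the paper's. The paper first asserts $\psi\in H^2(\Omega)$ by elliptic regularity and then applies its Lemma~\ref{lemma-orto}, which characterises $H^2_0(\Omega)$ inside $H^2(\Omega)\cap H^1_0(\Omega)$ by orthogonality of $\Delta\psi$ to $H^1$-harmonic functions; that lemma is proved by showing that the normal trace $\partial\psi/\partial n$ is an $L^2(\partial\Omega)$ function annihilating the dense set of traces of harmonic functions. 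You bypass all boundary-trace analysis via the orthogonal splitting $L^2(\Omega)=\Delta\bigl(H^2_0(\Omega)\bigr)\oplus\{q\in L^2(\Omega):\Delta q=0\}$ (closed range from $\|\Delta v\|_{L^2}=\|D^2v\|_{L^2}\ge c\|v\|_{H^2}$ on $H^2_0(\Omega)$) together with a Newtonian-potential argument showing that functions harmonic near $\overline\Omega$, hence $H^1$-harmonic functions, are $L^2$-dense among $L^2$-harmonic ones. Both routes are legitimate; yours has the advantage of not invoking global $H^2$ regularity for the Dirichlet problem, which on a merely Lipschitz domain is not available (the paper's appeal to \cite{GT} strictly requires more boundary regularity), and your density lemma is precisely what is needed anyway to upgrade the orthogonality in \eqref{vortex-1} from $H^1$- to $L^2$-harmonic test functions. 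The remaining parts --- harmonicity of $h$ by testing \eqref{psiweak} against $\Delta\phi$ with $\phi\in C^\infty_0(\Omega)$, the curl of the momentum balance plus simple connectedness to produce $\nabla p$, the identification $f=-\nabla^\perp\varphi$ via uniqueness in \eqref{varphiASS}, and \eqref{CR} --- coincide with the paper's argument up to packaging and up to the sign conventions, which wobble in the paper itself. Two points to tighten: ``$w=\Delta\psi\in H^1(\Omega)$ by elliptic regularity'' only yields $H^1_{\mathrm{loc}}(\Omega)$, and global membership is best obtained by identifying $\Delta\psi$ with the Lax--Milgram solution of \eqref{wweak} in $\mathcal J$ (your density lemma again); and the stream-function correspondence you invoke for the converse Stokes direction deserves a citation or the short de Rham argument rather than being taken as classical without comment.
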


The part of Proposition~\ref{prop0.1}
that concerns the equivalence of physical 
and of secondary variables is known, 
see~\cite{QV},  and
 we have recalled it here for our convenience in this form. 
In view of the harmonic character of the function \eqref{h}
and of Cauchy-Riemann equations \eqref{CR},
by the maximum principle we draw the following consequence.

\begin{theorem}\label{prop0.2}
Let $f$ and $\varphi$ be as in \eqref{subASS}.
The two overdetermined problems
\eqref{corrente} and \eqref{vortex} are equivalent and,
if in addition $\Omega$ is simply connected, either
is equivalent to the overdetermined
problem \eqref{Re-zero-over}.
\end{theorem}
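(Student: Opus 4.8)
The plan is to obtain everything as bookkeeping on top of Proposition~\ref{prop0.1}, which already sets up the dictionary between the \emph{unconstrained} problems \eqref{vortex-1}, \eqref{corrente-1} and \eqref{Re-zero}; what remains is to verify that, through that dictionary, the three overdetermination clauses \eqref{vortex-2}, \eqref{corrente-2} and \eqref{Re-zero-plus} correspond to one another.

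First I would dispatch the equivalence of \eqref{corrente} and \eqref{vortex}, for which no topological hypothesis is needed. By Proposition~\ref{prop0.1} the assignments $\psi\mapsto w:=\Delta\psi$ and $w\mapsto\psi$ (the latter normalised by $\psi\in H^1_0(\Omega)$, $\Delta\psi=w$) are mutually inverse bijections between the solutions $\psi\in H^2_0(\Omega)$ of \eqref{corrente-1} and the solutions $w\in H^1(\Omega)$ of \eqref{vortex-1}; in particular $w$ inherits $H^1(\Omega)$-regularity, so it has a well-defined trace on $\partial\Omega$, equal to the trace of $\Delta\psi$. Hence $\nu w$ is constant on $\partial\Omega$ exactly when $\nu\Delta\psi$ is, i.e. \eqref{vortex-2} holds for $w$ if and only if \eqref{corrente-2} holds for the corresponding $\psi$; this is the first equivalence.

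Next, assume $\Omega$ simply connected. Proposition~\ref{prop0.1} gives a bijection $\psi\mapsto u:=-\nabla^\perp\psi$ between the solutions $\psi\in H^2_0(\Omega)$ of \eqref{corrente-1} and the solutions $u\in H^1_0(\Omega;\mathbb R^2)$ of \eqref{Re-zero}, the associated pressure being pinned down up to an additive constant by \eqref{CR}, namely $\nabla p=-\nabla^\perp h$ with $h=\nu\Delta\psi+\varphi$ harmonic in $\Omega$. Since $\nabla^\perp h$ is a pointwise rotation of $\nabla h$ and $\Omega$ is connected, condition \eqref{Re-zero-plus} amounts to $h$ being constant in $\Omega$. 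Now $\varphi\in H^1_0(\Omega)$, so the trace of $h$ on $\partial\Omega$ equals $\nu$ times that of $\Delta\psi$; thus if $h\equiv c$ in $\Omega$ then $\nu\Delta\psi=c$ on $\partial\Omega$, which is \eqref{corrente-2}. Conversely, if \eqref{corrente-2} holds, say $\nu\Delta\psi=c$ on $\partial\Omega$, then $h-c\in H^1(\Omega)$ is harmonic with vanishing trace on $\partial\Omega$, so $h\equiv c$ in $\Omega$ and $\nabla p=-\nabla^\perp h=0$, i.e. \eqref{Re-zero-plus}. Chaining this with the first equivalence yields the theorem.

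The whole argument is routine once Proposition~\ref{prop0.1} is in hand; the one point I would treat with a little care is the last implication, that a harmonic function on a merely Lipschitz domain with constant boundary trace is globally constant. Rather than the classical maximum principle (which would ask for continuity up to $\overline\Omega$) I would note that $h-c\in H^1_0(\Omega)$ is weakly harmonic, so testing its weak formulation against $h-c$ itself forces $\nabla(h-c)\equiv 0$, whence $h-c\equiv0$ because $\Omega$ has finite measure; in this way no regularity of $\partial\Omega$ beyond Lipschitz enters, and all the trace statements used are legitimate because Proposition~\ref{prop0.1} guarantees $\Delta\psi\in H^1(\Omega)$ and hence $h\in H^1(\Omega)$.
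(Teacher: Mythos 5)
Your proof is correct and follows essentially the same route as the paper: both reduce everything to Proposition~\ref{prop0.1}, identify \eqref{vortex-2} with \eqref{corrente-2} through $w=\Delta\psi$, and then use that $h=\nu\Delta\psi+\varphi$ is harmonic and, since $\varphi\in H^1_0(\Omega)$, is the harmonic extension of the boundary vorticity, so that \eqref{CR} turns \eqref{Re-zero-plus} into the constancy of the boundary trace. Your only deviation is replacing the paper's appeal to the maximum principle by the $H^1_0$ energy (uniqueness) argument for the weakly harmonic function $h-c$, a sensible technical refinement on a merely Lipschitz domain but not a different proof.
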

Here, the holomorphic character of the complex-valued
function $h+\mathsf{i}\,p$ is an expedient ingredient in proving
Theorem~\ref{prop0.2}, clearly implying a technical restriction on the dimension. 
As mentioned incidentally above,
in the special case of a planar {\em smooth} simply connected open set $\Omega$ 
in view of global elliptic estimates~\cite{L,S2}
the equivalence of
\eqref{Re-zero-plus}, \eqref{vortex-2},
and \eqref{corrente-2}
can be also inferred from the explicit form of the normal boundary pressure gradient.

The equivalence of \eqref{Re-zero-over}, \eqref{vortex},
and \eqref{corrente}
motivates the comparison between the eigenvalues of the buckling problem
\begin{equation}
\label{1}
	\begin{cases}
	\Delta^2\psi +\lambda \Delta\psi=0\,,&\qquad\text{in $\Omega$,}\\
	\psi = 0\,,&\qquad \text{on $\partial\Omega$,}\\
	\frac{\partial\psi}{\partial n} =0\,,&\qquad \text{on $\partial\Omega$,}
	\end{cases}
\end{equation}
the eigenvalues relevant to the linear and stationary approximation of hydrodynamics
\begin{equation}
\label{2}
\begin{cases}
\Delta u +\lambda u = \nabla p\,, &\qquad\text{in $\Omega$,}\\
\nabla\cdot u=0 \,, & \qquad\text{in $\Omega$,}\\
u=0\,, &\qquad\text{on $\partial\Omega$,}
\end{cases}
\end{equation}
and the eigenvalues of the following constrained spectral variational problem:
\begin{equation}\label{3}
\begin{cases}
\Delta w + \lambda w=0\,,&\qquad\text{in $\Omega$,}\\
\displaystyle\int_\Omega wh \,dx=0 \,,&\qquad\text{for all $h\in H^1(\Omega)$
with $\Delta h=0$ in $\Omega$.}
\end{cases}
\end{equation}

After denoting by $\mathfrak{S}^B(\Omega),\mathfrak{S}^S(\Omega),
\mathfrak{S}^H(\Omega)$
the sets of all $\lambda>0$ for which \eqref{1}, \eqref{2}, and \eqref{3} admit a non-trivial solution, respectively, by Proposition~\ref{prop0.1} we see that

\[
\mathfrak{S}^B(\Omega)=
\mathfrak{S}^H(\Omega)
\subseteq
\mathfrak{S}^S(\Omega)\,,
\]
with the second inclusion being an equality
if $\Omega$ is simply connected.
Then, we consider the special value
represented by the minimum of
the buckling spectrum $\mathfrak{S}^B(\Omega)$, i.e., 
\begin{equation}
\label{lambda1B}
\lambda_1^B(\Omega) = \min_{\psi\in H_0^2(\Omega)}\frac{
	\displaystyle\int_\Omega(\Delta\psi)^2\,dx}{
	\displaystyle\int_\Omega|\nabla\psi|^2\,dx}\,,
\end{equation}
and its distinguished r\^ole in Schiffer's overdetermined eigenvalue problem
\begin{equation}
\label{5}
\begin{cases}
\Delta w + \lambda w=0\,,&\qquad\text{in $\Omega$,}\\
w\equiv{\rm const}\,,&\qquad\text{on $\partial\Omega$,}\\
\frac{\partial w}{\partial n} =0 \,,&\qquad\text{on $\partial\Omega$.}
\end{cases}
\end{equation}
Let $\mathfrak{S}^D(\Omega)=\{\lambda_i^D(\Omega)\}_{i\ge1}$
be the set of all eigenvalues of the Dirichlet Laplacian on $\Omega$.

\begin{theorem}\label{thm1}
Let $\Omega$ be a smooth and bounded planar open set. Then,  \eqref{5} has non-zero solutions if and only if
$\lambda\in\mathfrak{S}^B(\Omega)$
and \eqref{1} has a non-zero solution $\psi$ with
$\Delta\psi$ constant along $\partial\Omega$.
In that case, either $\lambda_1^B(\Omega)<\lambda=\lambda_i^D(\Omega)$, with $i>2$, or $\Omega$ must be a disc.
\end{theorem}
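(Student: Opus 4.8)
\medskip
\noindent\textbf{Plan of proof.}
The equivalence is essentially a restatement of Proposition~\ref{prop0.1}. If $w$ solves \eqref{5} with constant boundary value $c$, then for every harmonic $h\in H^1(\Omega)$ Green's identity gives
\[
-\lambda\int_\Omega wh\,dx=\int_{\partial\Omega}\bigl((\partial_n w)\,h-w\,\partial_n h\bigr)\,ds=-c\int_\Omega\Delta h\,dx=0 ,
\]
so $w$ solves \eqref{3} and $\lambda\in\mathfrak S^H(\Omega)=\mathfrak S^B(\Omega)$; the $\psi\in H^1_0(\Omega)$ with $\Delta\psi=w$ then belongs to $H^2_0(\Omega)$ --- the orthogonality of $w$ to harmonics forces $\partial_n\psi=0$, since boundary traces of $H^1$-harmonic functions are dense in $L^2(\partial\Omega)$ --- solves \eqref{1}, and satisfies $\Delta\psi=w\equiv c$ on $\partial\Omega$. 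Conversely, if $\psi$ solves \eqref{1} with $\Delta\psi\equiv c$ on $\partial\Omega$, then $w=\Delta\psi$ solves \eqref{3} by Proposition~\ref{prop0.1}; reading the displayed computation backwards (the bulk term $\int_\Omega wh\,dx$ vanishes because $w$ solves \eqref{3}, the term $\int_{\partial\Omega}w\,\partial_n h\,ds$ because $w\equiv c$ and $h$ is harmonic) yields $\int_{\partial\Omega}(\partial_n w)\,h\,ds=0$ for all harmonic $h$, hence $\partial_n w=0$ on $\partial\Omega$ and $w$ solves \eqref{5}.

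For the dichotomy, fix a non-trivial solution $w$ of \eqref{5}, smooth up to $\partial\Omega$ by elliptic regularity. On $\partial\Omega$ the function $w$ is constant and $\partial_n w=0$, so $\nabla w=0$ there and each $v_j:=\partial_{x_j}w$ lies in $H^1_0(\Omega)$ with $\Delta v_j+\lambda v_j=0$. The pair $v_1,v_2$ does not vanish identically --- else $w$ would be constant, hence $w\equiv 0$ --- nor is it proportional: a relation $\alpha\cdot\nabla w\equiv 0$ would make $w=W(\beta\cdot x)$ for some unit vector $\beta\perp\alpha$, with $W''+\lambda W=0$, so the analytic function $W$, being equal to the constant $c$ on the non-degenerate interval $\{\beta\cdot x:x\in\partial\Omega\}$, would be constant and again $w\equiv 0$. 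Hence $\lambda$ is a Dirichlet eigenvalue of $\Omega$ of multiplicity at least two, so $\lambda\neq\lambda_1^D(\Omega)$ (assuming, as we may, $\Omega$ connected) and $\lambda=\lambda_i^D(\Omega)$ for some $i\ge 2$. By the equivalence just proved $\lambda\in\mathfrak S^B(\Omega)$, so $\lambda\ge\lambda_1^B(\Omega)$. If $\lambda>\lambda_1^B(\Omega)$, Payne's inequality $\lambda_1^B(\Omega)\ge\lambda_2^D(\Omega)$ gives $\lambda>\lambda_2^D(\Omega)$, so $\lambda\notin\{\lambda_1^D(\Omega),\lambda_2^D(\Omega)\}$ and $i>2$ --- the first alternative. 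There remains the borderline case $\lambda=\lambda_1^B(\Omega)$, where we must show $\Omega$ is a disc.

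Here $\psi$ is a first buckling eigenfunction with $\Delta\psi\equiv a:=\mathrm{const}$ on $\partial\Omega$, and $a\neq 0$ (otherwise $w=\Delta\psi$ would be a Dirichlet eigenfunction with vanishing normal derivative, hence $w\equiv 0$). The plan is to feed the overdetermination into a Rellich--Pohozaev identity: testing $\Delta^2\psi+\lambda\Delta\psi=0$ against $x\cdot\nabla\psi$ and integrating by parts, using $\psi=\partial_n\psi=0$ and $\Delta\psi=a$ on $\partial\Omega$, one finds in two dimensions that $\int_\Omega(x\cdot\nabla\psi)\,\Delta\psi\,dx=0$ while $\int_\Omega(x\cdot\nabla\psi)\,\Delta^2\psi\,dx=\int_\Omega(\Delta\psi)^2\,dx-a^2|\Omega|$; since the equation forces $\int_\Omega(x\cdot\nabla\psi)(\Delta^2\psi+\lambda\Delta\psi)\,dx=0$, this gives the clean identity
\[
\int_\Omega(\Delta\psi)^2\,dx=a^2\,|\Omega| ,
\]
which, together with the energy identity $\int_\Omega(\Delta\psi)^2\,dx=\lambda_1^B(\Omega)\int_\Omega|\nabla\psi|^2\,dx$, yields $\lambda_1^B(\Omega)=a^2|\Omega|/\int_\Omega|\nabla\psi|^2\,dx$. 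I would then close the argument by bringing in the extremal character of the first buckling load --- Hadamard's first-variation formula shows that $\Delta\psi\equiv a$ on $\partial\Omega$ is exactly the condition for $\Omega$ to be a volume-constrained critical shape for $\lambda_1^B$ --- and combining the Pohozaev identity with Nadirashvili's theorem that the disc is, among planar domains of prescribed area, the unique minimizer of $\lambda_1^B$ (alternatively, with the minimality of $\psi$ in \eqref{lambda1B} through a suitable rearrangement), so as to force radial symmetry. Establishing this rigidity is the crux and the expected main obstacle: in contrast with second-order, Serrin-type overdetermined problems, here the auxiliary function $w=\Delta\psi$ changes sign inside $\Omega$ --- already on the disc --- so the naive moving-plane and $P$-function schemes do not apply directly, and the symmetry must instead be extracted from the Pohozaev identity together with the first-buckling extremality, perhaps via a moving-plane argument adapted to the coupled pair $(\psi,w)$ or a refined symmetrization. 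The ancillary steps --- regularity of $w$ and $\psi$ up to $\partial\Omega$, density of harmonic traces, and justification of the integrations by parts --- are routine under the smoothness hypothesis on $\Omega$.
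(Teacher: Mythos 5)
Your first part (the equivalence) is correct and is essentially the paper's own argument: the identity $\int_{\partial\Omega} w\,\partial_n h\,d\mathcal{H}^1=c\int_\Omega\Delta h\,dx=0$ for harmonic $h$, combined with the density of harmonic traces in $L^2(\partial\Omega)$ (the paper's Lemma~\ref{lemma-orto}), is exactly how the paper passes between \eqref{5}, \eqref{3} and \eqref{1}. Your multiplicity argument via $\partial_{x_1}w,\partial_{x_2}w$ and the reduction of the first alternative to the inequality $\lambda_1^B(\Omega)\ge\lambda_2^D(\Omega)$ are also sound.

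The genuine gap is the borderline case $\lambda=\lambda_1^B(\Omega)$, which is the entire content of the rigidity claim. You close it by invoking ``Nadirashvili's theorem that the disc is, among planar domains of prescribed area, the unique minimizer of $\lambda_1^B$.'' No such theorem exists: Nadirashvili's result concerns the clamped plate (the first Dirichlet eigenvalue of $\Delta^2$), whereas the minimality of the disc for $\lambda_1^B$ under an area constraint is precisely the P\'olya--Szeg\H{o} conjecture, which this very paper records as open (a minimizer is known to exist only among simply connected quasi-open sets, and its regularity is unresolved). Even granting that conjecture, you would still need to know that your particular $\Omega$ is a minimizer, which is not part of the hypothesis; and the Pohozaev identity $\int_\Omega(\Delta\psi)^2\,dx=a^2|\Omega|$, while plausible, carries no rigidity by itself, as you acknowledge. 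The paper does not reprove this step: it cites the Willms--Weinberger argument (Henrot, Theorem~11.3.7), whose mechanism is different from anything you propose. One acts with infinitesimal rotations on the Schiffer solution $w$; if $\Omega$ is not a disc the resulting nontrivial Dirichlet eigenfunction for $\lambda$ has at least three nodal domains, and the Faber--Krahn inequality, the equality case in the Ashbaugh--Benguria bound, and the equality case in Weinstein's inequality $\lambda_1^B(\Omega)\ge\lambda_2^D(\Omega)$ (conjectured by Weinstein, proved by Payne and Friedlander --- what you call Payne's inequality) then force $\lambda>\lambda_1^B(\Omega)$, a contradiction. To complete your proof you must replace the appeal to the (open) isoperimetric property by this rotation/nodal-domain argument, or cite it directly.
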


The smoothness assumption on $\Omega$ in Theorem~\ref{thm1} is redundant: it is known~\cite{W}
that the existence of 
non-zero solutions for the overdetermined problem \eqref{5} implies the real analytic regularity of the boundary.

The first statement of Theorem~\ref{thm1} is a straightforward consequence of Theorem~\ref{prop0.2}.
Given that, the rigidity claim in conclusion is deduced in~\cite{B} from 
{\em Weinstein's inequality}
\begin{equation}
\label{weineq}
	\lambda_1^B(\Omega)\ge\lambda_2^D(\Omega)\,,
\end{equation}
which was conjectured in~\cite{We} and proved in~\cite{P,F},
and holds as an equality if and only if $\Omega$ is a disc.
That  rephrases an isoperimetric property: 
if the minimiser for $\lambda_1^B(\Omega)$ under area constraint
(which 
exists~\cite{AB} in the class of simply connected open sets) is smooth, then
it must be a disc.
We refer the reader to~\cite[Chap.~11]{H} and~\cite{KP}
for the detailed proof, due to N.B. Willms and H.F. Weinberger. In sketch, 
if $\Omega$ is a smooth open set 
then \eqref{corrente-2} is a necessary condition
of optimality in terms of the first buckling eigenfunction $\psi$,
and if $\Omega$ is not a disc
then,
by acting with infinitesimal rotations 
on the solution $w=\Delta\psi$ of the overdetermined problem \eqref{5},
one may produce
Dirichlet eigenfunctions relative to $\lambda$ with three nodal domains at least; in view of Faber-Krahn inequality, of the equality case in the
universal Ashbaugh-Benguria inequality, and of that in Weinstein's inequality, that would yield a contradiction unless $\lambda>\lambda_1^B(\Omega)$.

\begin{rmk}\label{inci}
Deducing \eqref{corrente-2} as a necessary condition of minimality of $\lambda_1^B(\Omega)$ among smooth open sets of given area requires simplicity, which at a first glance seems to imply a restriction
because $\lambda_1^B(\Omega)$ is not simple, in general~\cite{BP}. On the other hand, 
for a smooth planar simply connected domain, the optimality under area constraint is known
to imply the simplicity of the first eigenvalue, see e.g.\ the calculation at the beginning of
Sect.~4 in~\cite{HMP}. (We refer the
reader to~\cite{Le} for the analogous result for the clamped plate problem.) Conditions similar to
\eqref{corrente-2} hold also for stationary shapes, possibly even for higher eigenvalues, see~\cite{BL,BL2}.
\end{rmk}
\begin{rmk}\label{prermk}
A bounded
Lipschitz planar open set $\Omega$ is said to have the
{\em Pompeiu property}
if the function that vanishes identically is the only one that
integrates to zero on any isometric image of its~\cite{Z}.
The existence of eigenvalues 
$\lambda$ for \eqref{5} is known to be equivalent
to the failure of Pompeiu property~\cite{B}. Thus, in the conclusion
of Theorem~\ref{thm1}, in order to rule out the first option in the alternative one would need to answer in the affirmative the long-standing conjecture that Pompeiu's property fails if and only if $\Omega$ is the disc. 
\end{rmk}

Given $\lambda>0$
let $u$ be a non-trivial solution of \eqref{2}.
We recall that 
the 2D incompressible Navier-Stokes system
of \eqref{4}, with $u$ as initial datum, is well posed under assumptions that cover
the conservative case $f=0$, see~\cite{L0}.
Then, let 
 $v(x,t)$ be the corresponding global-in-time solution.
The associated vorticity $\omega=\partial_{x_1}v^2-\partial_{x_2}v^1$
solves
\begin{equation}
\label{vort-ev}
\begin{cases}
\partial_t\omega + (v\cdot\nabla)\omega = \nu \Delta\omega\,, & \qquad\text{in $\Omega\times(0,+\infty)$,}\\
\omega \perp \mathcal{H}\,, &\qquad \text{in $L^2(Q)$,}
\end{cases}
\end{equation}
where
\begin{equation}
\label{Hcal}
\text{ $Q=\Omega\times(0,\infty)$, and 
$\mathcal{H}=\big\{h\in L^2(0,\infty\mathbin{;}H^1(\Omega)) \mathbin{\colon}\Delta h(\cdot,t)=0\,, \text{for all $t>0$}\big\}$.} 
\end{equation}
The orthogonality condition in \eqref{vort-ev}
can be deduced from \eqref{4} by arguing as done in the proof of Proposition~\ref{prop0.1} for the linear stationary approximation, see also~\cite{QV}. By Proposition~\ref{prop0.1}, Eq.\ \eqref{vortex-1}
holds with  $w=\nabla\times u$.
If the additional condition \eqref{vortex-2} is also valid, then $w$ solves the overdetermined problem \eqref{5}, and we have
\[
\begin{cases}
(v\cdot\nabla)\omega = 0\,, & \qquad\text{in $\Omega\times(0,+\infty)$,}\\
e^{\nu\lambda t}\omega(\cdot,t) = {\rm const}&\qquad\text{for all $t>0$},\\
\frac{\partial\omega}{\partial n}(\cdot,t)=0\,,&
\qquad\text{for all $t>0$,}
\end{cases}
\]
\[
	\omega(x,t) =e^{-\nu \lambda t} w(x)\,,\qquad \text{for }(x,t)\in \Omega\times(0,+\infty)\,.
\]
Then, by observing that the function \eqref{h} introduced in Proposition~\ref{prop0.1} is the harmonic extension of the boundary vorticity values, we
infer the following result.

\begin{theorem}\label{prop1}Let $\Omega$ be
simply connected, let $\lambda>0$, let 
$u$
be a non-trivial solution of \eqref{2}, and let $\psi$ and $w$ be corresponding
solutions of \eqref{1} and \eqref{3}.
A necessary and sufficient condition that either
condition \eqref{corrente-2} of \eqref{vortex-2} (and hence both) be valid
is that 
\begin{equation}
\label{neumann}
	\frac{\partial p}{\partial n}=0\,,\qquad\text{on $\partial\Omega$}
\end{equation}
or, equivalently, that
$\nabla p=0$ in $\Omega$. 
If this happens and  $f=0$, then for any $\nu\in(0,+\infty)$ 
\begin{equation}
\label{cell}
v(x,t)= e^{-\nu\lambda t}u(x)\,,
\end{equation}
is the solution of \eqref{4} with $v(\cdot,0)=u$;
if $\Omega$ is of class $C^{2,\alpha}$ then the converse also holds.
\end{theorem}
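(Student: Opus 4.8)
The plan is to reduce the whole statement to the single equivalence, valid for $u$ solving \eqref{2} on a simply connected $\Omega$,
\[
(u\cdot\nabla)u\ \text{is a gradient}\qquad\Longleftrightarrow\qquad\nabla p=0\ \text{in }\Omega .
\]
The first assertion of the theorem is Theorem~\ref{prop0.2} read in the present context: taking $\nu=1$, $f=\lambda u$ and $\varphi=\lambda\psi$ — which satisfy \eqref{subASS}, since $\nabla\cdot u=0$, $\nabla\times(\lambda u)=\lambda\,\nabla\times u\in L^2(\Omega)\subset H^{-1}(\Omega)$, $u\cdot n=0$ on $\partial\Omega$, and $\Delta(\lambda\psi)+\nabla\times(\lambda u)=0$ by \eqref{1} — the systems \eqref{Re-zero}, \eqref{vortex-1}, \eqref{corrente-1} become exactly \eqref{2}, \eqref{3}, \eqref{1}, so that \eqref{Re-zero-over} is \eqref{2} together with $\nabla p\equiv0$, \eqref{vortex} is \eqref{3} together with \eqref{vortex-2}, and \eqref{corrente} is \eqref{1} together with \eqref{corrente-2}; Theorem~\ref{prop0.2} gives their equivalence, and since $p$ is harmonic ($\Delta p=\nabla\cdot(\Delta u+\lambda u)=0$) with $\partial\Omega$ connected, $\partial p/\partial n=0$ on $\partial\Omega$ is in turn equivalent to $\nabla p\equiv0$.

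Next take $f=0$. The field $v(x,t)=e^{-\nu\lambda t}u(x)$ automatically satisfies $\nabla\cdot v=0$, $v|_{\partial\Omega}=0$ and $v(\cdot,0)=u$; using \eqref{2} to write $\nu\Delta v=\nu e^{-\nu\lambda t}(\nabla p-\lambda u)$ and cancelling $\partial_t v=-\nu\lambda e^{-\nu\lambda t}u$, the momentum equation of \eqref{4} reduces to
\[
\nabla q=\nu e^{-\nu\lambda t}\,\nabla p-e^{-2\nu\lambda t}\,(u\cdot\nabla)u
\]
for the (unknown) pressure $q$ of \eqref{4}, which is solvable — simultaneously for all $\nu>0$ — exactly when $(u\cdot\nabla)u$ is a gradient. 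Setting $\omega_0:=\nabla\times u=-\Delta\psi$ (recall $u=-\nabla^\perp\psi$), the plane identity $(u\cdot\nabla)u=\nabla\big(\tfrac12|u|^2\big)+\omega_0\,\nabla\psi$ shows that this in turn holds iff $\omega_0\,\nabla\psi$ is curl-free, i.e.\ $(u\cdot\nabla)\omega_0=0$ in $\Omega$. So the remaining claims amount to: for $\Omega\in C^{2,\alpha}$, $(u\cdot\nabla)\omega_0=0$ if and only if $\nabla p=0$.

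The ``if'' direction is immediate: if $\nabla p=0$, then by \eqref{CR} the harmonic function $h$ of \eqref{h} (here $h=\nu\Delta\psi+\varphi=\lambda\psi-\omega_0$, harmonic by Proposition~\ref{prop0.1}) is constant, hence $\omega_0=\lambda\psi+\mathrm{const}$, so $\omega_0\,\nabla\psi=\nabla\big(\tfrac\lambda2\psi^2+\mathrm{const}\cdot\psi\big)$ is a gradient and $(u\cdot\nabla)\omega_0=\lambda\,(u\cdot\nabla)\psi=0$; that $v$ is then \emph{the} solution of \eqref{4} with $v(\cdot,0)=u$ follows from well-posedness~\cite{L0}. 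For the converse, suppose $v=e^{-\nu\lambda t}u$ solves \eqref{4}; then $(u\cdot\nabla)\omega_0=0$, so $(u\cdot\nabla)h=-(u\cdot\nabla)\omega_0=0$, i.e.\ $\nabla h\parallel\nabla\psi$ in $\Omega$: the harmonic function $h$ is constant along the streamlines of \eqref{2}. Here I would bring in regularity: by Stokes estimates for \eqref{2} on a $C^{2,\alpha}$ domain (bootstrapping from $u\in H^1_0$), $u\in C^{2,\alpha}(\overline{\Omega};\mathbb R^2)$, hence $\omega_0\in C^{1,\alpha}(\overline{\Omega})$ and, solving $-\Delta\psi=\omega_0$ with $\psi|_{\partial\Omega}=0$, $\psi\in C^{2,\alpha}(\overline{\Omega})$ (with $\partial_n\psi=0$ on $\partial\Omega$ by Proposition~\ref{prop0.1}), while $h\in C^{1,\alpha}(\overline{\Omega})$; moreover $\psi=\partial_n\psi=0$ on $\partial\Omega$ forces $\partial_{\tau\tau}\psi=0$ there, so $h|_{\partial\Omega}=\Delta\psi|_{\partial\Omega}=\partial_{nn}\psi|_{\partial\Omega}=-\omega_0|_{\partial\Omega}$. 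Fix $x_0\in\partial\Omega$ with $h(x_0)\neq0$: then $\partial_{nn}\psi(x_0)\neq0$, so in a collar about $x_0$ the function $\psi$ has one strict sign in $\Omega$ and, for $\varepsilon\to0$ of that sign, $\{\psi=\varepsilon\}$ is locally a connected $C^1$ arc (implicit function theorem) on which $\nabla\psi\neq0$, shrinking onto $\partial\Omega$; such an arc, being a connected piece of a level set of $\psi$ where $\nabla\psi\neq0$, is a streamline, along which $h$ is constant. Letting $\varepsilon\to0$ and using continuity of $h$ on $\overline{\Omega}$, $h$ is locally constant on $\partial\Omega$ near $x_0$. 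Thus $h|_{\partial\Omega}$ is continuous, locally constant on the open set $\{h|_{\partial\Omega}\neq0\}$ (so attaining only countably many values there) and identically $0$ on its complement; a continuous function on the connected set $\partial\Omega$ whose image lies in a countable set is constant, so $h|_{\partial\Omega}$ is constant, $h$ is constant in $\Omega$, and $\nabla p=-\nabla^\perp h=0$ by \eqref{CR}. By the first part, \eqref{vortex-2} and \eqref{corrente-2} hold.

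The main obstacle is the converse, and within it the boundary analysis: securing $\psi\in C^{2,\alpha}(\overline{\Omega})$ for \eqref{1} on a merely $C^{2,\alpha}$ domain; controlling the topology of the small level sets of $\psi$ near a boundary point where the boundary vorticity is nonzero (that they are connected arcs collapsing onto $\partial\Omega$, so that the constant value of $h$ along them has a genuine limit); and absorbing the boundary points where the boundary vorticity vanishes — which the soft topological step handles, since there $h=0$ automatically. The rest — the reductions, the plane identity, the ``if'' direction — is routine.
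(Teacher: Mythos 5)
Your argument is correct, and its skeleton coincides with the paper's: the first assertion is obtained by specialising Theorem~\ref{prop0.2} to $f=\lambda u$, and the Navier--Stokes claim is reduced to the single identity $(u\cdot\nabla)\omega=0$ (you get there via the Bernoulli-type identity $(u\cdot\nabla)u=\nabla(\tfrac12|u|^2)+\omega_0\nabla\psi$, the paper via the computation $(u\cdot\nabla)w=(\nabla p-\lambda u)\cdot\nabla\psi$ in Lemma~\ref{fromu}; these are the same calculation in two guises). Where you genuinely diverge is the converse implication. The paper's Lemma~\ref{fromv} deduces $\nabla\psi\cdot\nabla p=0$ in $\Omega$ and then asserts that, by boundary regularity, ``the equations hold at the boundary, which is a level set of $\psi$'', concluding $\partial p/\partial n=0$ in one line; but since $\psi\in H^2_0(\Omega)$ one has $\nabla\psi=0$ on $\partial\Omega$, so the pointwise relation degenerates exactly where one wants to use it, and some further argument is required. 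Your proof supplies it: you pass to the harmonic function $h$ of \eqref{h}, which is constant along streamlines, you track the constants along the interior level sets $\{\psi=\varepsilon\}$ collapsing onto $\partial\Omega$ near boundary points where $\partial_{nn}\psi\neq0$ (equivalently, where the boundary vorticity is nonzero), and you absorb the remaining boundary points by the observation that a continuous function on the connected curve $\partial\Omega$ which is locally constant on $\{h\neq0\}$ and vanishes elsewhere has countable, hence one-point, image; then $h$ is constant on $\partial\Omega$, hence in $\Omega$ by the maximum principle, and $\nabla p=-\nabla^\perp h=0$ by \eqref{CR}. This costs you the $C^{2,\alpha}$ Schauder bootstrap for $u$ and $\psi$ up to the boundary (which is legitimate under the stated hypothesis on $\Omega$), and in exchange it buys a self-contained and fully justified proof of the step that the paper leaves essentially implicit; note also that you obtain the stronger conclusion $\nabla p=0$ directly rather than only \eqref{neumann}, which is harmless since the two are equivalent for the harmonic pressure by the first part of the statement.
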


Going back to our initial question,
pairing Theorem~\ref{thm1} and Theorem~\ref{prop1}
gives a positive answer at least in the case
of a fluid subject to conservative forces 
in a smooth simply connected domain with initial velocity given by a first
eigenfunction for \eqref{2}.
Also, as said, for the incompressible 2D Navier-Stokes system \eqref{4} in a smooth bounded domain 
the validity of homogeneous Neumann boundary conditions
 for the pressure is equivalent to a constant vorticity
 on the walls: by Theorem~\ref{prop1}, 
in the case of the linear stationary eigenvalue problem \eqref{2}
 the equivalence holds irrespective of the regularity of the boundary.

Incidentally, another consequence is that if
$\Omega\subset\mathbb R^2$
is a $C^{2,\alpha}$ simply connected bounded
open set, then the dissipation estimate for a solution $v$
of \eqref{4}, namely
\begin{equation}
\label{dissipation}
\sup_{t>0}  e^{\nu \lambda_1^S(\Omega) t}
\int_\Omega |v(x,t)|^2\,dx \le
\int_\Omega |v(x,0)|^2\,dx\,,
\end{equation}
holds as an equality only if $\Omega$ is a disc,
provided that the least eigenvalue of Stokes operator 
\begin{equation}
\label{lambda1S}
\lambda_1^S(\Omega) = \min_{u\in H^1_0(\Omega\mathbin{;}\mathbb R^2)}
\left\{\frac{\displaystyle\int_\Omega |\nabla u|^2\,dx}{\displaystyle
\int_\Omega|u|^2\,dx}\mathbin{\colon} \text{$\nabla\cdot u=0$ in $\Omega$}
\right\}
\end{equation}
is simple. 

By Theorem~\ref{prop1},
a statement equivalent to the conjecture
recalled in Remark~\ref{prermk} is that, with the exception of discs, smooth and bounded simply connected planar domains do not support cellular flows of the form
\eqref{cell}, with $\lambda >0$ and $u$ as in 
\eqref{2}, solving the 2D incompressible Navier-Stokes equations \eqref{4} with external forces coming from a potential.

The results in this note are much more elementary and only concern the special case in which $\lambda$ is takes
the special value \eqref{lambda1S}.
That was studied in~\cite{HMP}
from the perspective of spectral optimisation.
By definition (see Proposition~\ref{propSC} below), we have
$\lambda_1^B(\Omega)\ge \lambda_1^S(\Omega)$,
with equality if $\Omega$ is simply connected.
We remark two immediate implications of this fact.

\begin{rmk}[Isoperimetric property]
By the equality case in 
\eqref{weineq},
the isoperimetric property of the disc for \eqref{lambda1S}, were it valid, would confirm P\'olya and Szeg\H{o}'s belief that
{\em the disc minimises the least buckling eigenvalue among open sets of given area}. 
The local optimality proved in~\cite[Theorem~2]{HMP} is consistent
with this long standing conjecture. 
\end{rmk}

\begin{rmk}[Optimality conditions]\label{necopt}
Owing to
Theorem~\ref{thm1}, Theorem~\ref{prop1},
and Remark~\ref{inci}, if $\Omega$ is a bounded, smooth, and simply connected planar open set and
$\lambda_1^S(\Omega)$ is minimal among open sets of given area,
then the corresponding eigenfunction $u$ (unique up to
proportionality) is a pressureless solution of \eqref{2},
compare with
\cite[Theorem~4]{HMP}.\end{rmk}
One also recovers the following known fact:
if $\Omega\subset\mathbb R^2$ is a minimiser for $\lambda_1^S(\Omega)$ under
area constraint
{\em that is smooth, bounded, and simply connected}, 
then it must be a disc. A minimiser 
 under area constraint for $\lambda_1^S(\Omega)$
is known to exist in the class of quasi-open sets~\cite[Theorem~1]{HMP}. Unfortunately, the regularity issue lacks a solution and so
the isoperimetric property of the disc remains
an open problem for \eqref{lambda1S}, as well as for \eqref{lambda1B}.
\medskip

Eventually, we point out the difference with the 3D case, in which the ball fails the necessary minimality condition on the second shape variation sign~\cite[Theorem~3]{HMP}. In fact, in~\cite{HMP} it is also proved that the boundary
of any smooth minimiser  would belong to the homeomorphism class of the torus, and on that grounds the solution is conjectured to be the singular axis-symmetric 
domain obtained from a ball by bringing the poles to contact.

As for the rigidity of Neumann pressure boundary conditions
in three dimensions, around a {\em smooth} domain
the normal boundary pressure gradient for solutions of \eqref{4} 
reads as
\[
\frac{\partial p}{\partial n} = 
\mathbb{I}(v-n(v\cdot n)\mathbin,v-n(v\cdot n))
-\nu\, {\nabla}_{\tau}\times [(\nabla \times v)\times n]\,,
\]
where the (scalar) curl is in the local tangential coordinates 
$\tau=({\tau_1,\tau_2})$ that complete the outward unit normal $n$ to a positively oriented
orthonormal system in $\mathbb R^3$.
In the latter, one still gets rid of the first fundamental form $\mathbb I(\cdot\mathbin,\cdot)$  by the
no-slip condition, like in two dimensions, but equating
to zero the term proportional to viscosity has less obvious implications about the tangential vorticity, also depending on the topology of the
boundary.

Eventually, irrespective of the dimension,
we do not expect the results presented here to transfer to the vanishing viscosity limit of \eqref{4}. Indeed, formally, for
Euler equations we have $\nu=0$ and so no information on the boundary
vorticity seems to be obtainable from a homogeneous Neumann condition for the pressure at the boundary; also, the curvature term appearing in the formula for the normal derivative of the pressure does not clear off because of the tangential shear.

\section{Secondary variables}
We recall that $\Omega$, unless otherwise specified, will
be assumed to be a planar Lipschitz open set of finite area.
We begin with 
the proof of Proposition~\ref{prop0.1}, which concerns in particular the
equivalence of the three problems in the triptych
\begin{equation*}
\begin{minipage}{.25\textwidth}
\centering
\[
\begin{cases}
\nu\Delta^2\psi+\Delta \varphi=0\,, & \\
\psi=0\,, \text{ on $\partial\Omega$} & \,\\
\frac{\partial\psi}{\partial n}=0\,,\text{ on $\partial\Omega$}& 
\end{cases}
\]
\end{minipage}
\begin{minipage}{.25\textwidth}
\centering
\[
\begin{cases}
\nu\Delta u+ f=\nabla p\,, & \\
\nabla\cdot u=0\,,  & \\
u=0\,,\text{ on $\partial\Omega$}& 
\end{cases}
\]
\end{minipage}
\begin{minipage}{.3\textwidth}
\centering
\[
\begin{cases}
\nu\Delta w+\nabla\times f=0\,, & \\
\int_\Omega w\,h\,dx=0\,,  & \\
\quad \text{ for all harmonic $h\in H^1(\Omega)$,}& 
\end{cases}
\]
\end{minipage}
\end{equation*}
where we recall that \eqref{subASS} holds, i.e., $f \in L^2(\Omega\mathbin{;}\mathbb R^2)$ is such that $\nabla \times f\in H^{-1}(\Omega)$,
$\nabla\cdot f=0$ in $\Omega$, and
 $f\cdot n=0$ on $\partial\Omega$, and $\varphi\in H^1_0(\Omega)$ is such that $\Delta \varphi=\nabla\times f$.
 
 Here and henceforth, we always understand solutions of such elliptic problems the usual weak sense. More precisely, 
 we set $L^2_\sigma(\Omega;\mathbb R^2) = \{u\in L^2(\Omega;\mathbb R^2)\colon\nabla\cdot u=0\}$,
 we let
 $V_\sigma(\Omega)=H^1_0(\Omega;\mathbb R^2)\cap L^2_\sigma(\Omega) $,
 we define $\mathcal{J}$ as the closure in $H^1(\Omega)$,
 with respect to the $L^2(\Omega)$ topology,
 of the functions that are orthogonal
 in $L^2(\Omega)$ to all $H^1(\Omega)$ harmonic functions, and we write 
 that $\psi\in H^2_0(\Omega)$,
$u\in V_\sigma(\Omega)$, and 
 $w\in \mathcal{J}$ 
are weak solutions if
\begin{align}
\label{psiweak}
& \nu\int_\Omega \Delta \psi\Delta\phi\,dx  =\int_\Omega \nabla \varphi\cdot\nabla\phi\,dx\,, \qquad\text{for all $\phi\in H^2_0(\Omega)$,}\\
\label{uweak}
& \nu\int_\Omega \nabla u\,\colon\!\nabla v\,dx
=\int_\Omega f\cdot v\,dx\,,\qquad\text{for 
all $v\in V_\sigma(\Omega)$,}\\
\label{wweak}
& \nu	\int_\Omega \nabla w\cdot\nabla\xi\,dx
	=\big\langle \nabla\times f\mathbin,\xi\big\rangle\,,\qquad
	\text{for all $\xi\in\mathcal{J}$,}
\end{align}
respectively.  Details about weak solutions $u$ of the second problem can
be found, e.g., in~\cite{G}. 
 
We will make repeatedly use of the following Lemma,
in particular with the aim of relating the boundary conditions in the left-hand side to the non-local conditions in the right hand-side. We refer to~\cite{QV}
for a more general statement in the case of smooth open sets.

\begin{lemma}\label{lemma-orto}
Let $\Omega$ be Lipschitz and $\psi\in H^2(\Omega)\cap H^1_0(\Omega)$. Then $\psi\in H^2_0(\Omega)$ if and only if
\begin{equation}
\label{orthoha}
\int_\Omega h \Delta \psi\,dx =0\,,\qquad\text{for all
$h\in H^1(\Omega)$
with $\Delta h=0$ in $\Omega$.}
\end{equation}
\end{lemma}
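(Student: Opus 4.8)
The plan is to prove the two implications separately: the forward one is a density argument, the converse rests on a boundary integration by parts. For the ``only if'' part I would fix $\psi\in H^2_0(\Omega)$ and a sequence $\psi_k\in C^\infty_c(\Omega)$ with $\psi_k\to\psi$ in $H^2(\Omega)$. For any harmonic $h\in H^1(\Omega)$, integrating by parts once — legitimate since $\psi_k$ has compact support — gives $\int_\Omega h\,\Delta\psi_k\,dx=-\int_\Omega\nabla h\cdot\nabla\psi_k\,dx$, and the right-hand side vanishes because $\psi_k\in H^1_0(\Omega)$ is an admissible test function in the weak formulation of $\Delta h=0$. Letting $k\to\infty$, with $\Delta\psi_k\to\Delta\psi$ in $L^2(\Omega)$ and $h\in L^2(\Omega)$, yields \eqref{orthoha}.

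For the ``if'' part the starting point is Green's formula on the Lipschitz set $\Omega$: for $\psi\in H^2(\Omega)$ and $h\in H^1(\Omega)$,
\[
\int_\Omega h\,\Delta\psi\,dx=-\int_\Omega\nabla h\cdot\nabla\psi\,dx+\int_{\partial\Omega}h\,\frac{\partial\psi}{\partial n}\,d\sigma\,,
\]
where $\partial\psi/\partial n$ denotes the $L^2(\partial\Omega)$ normal-derivative trace of $\psi$ — the boundary trace of $\nabla\psi\in H^1(\Omega;\mathbb R^2)$ contracted with the a.e.-defined unit normal — and $h$ on $\partial\Omega$ is its $H^{1/2}(\partial\Omega)$ trace; the identity extends from smooth functions by density. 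When $h$ is harmonic the gradient term vanishes, since $\psi\in H^1_0(\Omega)$, so \eqref{orthoha} is equivalent to $\int_{\partial\Omega}h\,(\partial\psi/\partial n)\,d\sigma=0$ for every harmonic $h\in H^1(\Omega)$. Solving the Dirichlet problem on $\Omega$ shows that the boundary traces of such $h$ exhaust $H^{1/2}(\partial\Omega)$, a dense subspace of $L^2(\partial\Omega)$; hence $\partial\psi/\partial n=0$ on $\partial\Omega$. Together with $\psi=0$ on $\partial\Omega$ (from $\psi\in H^1_0(\Omega)$) and the characterisation of $H^2_0(\Omega)$, on a Lipschitz domain, as the common kernel of the Dirichlet trace and of the normal-derivative trace, this gives $\psi\in H^2_0(\Omega)$.

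The integrations by parts and the limit passage are routine; the delicate point — where the Lipschitz hypothesis really enters — is to invoke two classical facts rather than reprove them: (i) the validity of the Green identity above and the fact that $\partial\psi/\partial n$ is a well-defined element of $L^2(\partial\Omega)$ for $\psi\in H^2(\Omega)$ on a Lipschitz set, and (ii) the trace characterisation of $H^2_0(\Omega)$ for Lipschitz $\Omega$; both are available in the literature. An alternative route for (ii): once $\partial\psi/\partial n=0$ and $\psi\in H^1_0(\Omega)\cap H^2(\Omega)$ are known, one checks directly that the extension of $\psi$ by zero belongs to $H^2(\mathbb R^2)$, and then approximates it in $H^2$ by $C^\infty_c(\Omega)$ functions via translation and mollification.
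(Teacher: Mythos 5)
Your proposal is correct and follows essentially the same route as the paper: a density argument plus one integration by parts for the forward implication, and for the converse the Green identity reducing \eqref{orthoha} to $\int_{\partial\Omega}h\,\frac{\partial\psi}{\partial n}\,d\sigma=0$, then the solvability of the Dirichlet problem to get density of harmonic traces in $L^2(\partial\Omega)$ and conclude $\frac{\partial\psi}{\partial n}=0$. The only difference is that you make explicit the final step (the trace characterisation of $H^2_0(\Omega)$ on a Lipschitz domain), which the paper leaves implicit; that is a reasonable addition, not a change of method.
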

\begin{proof}
Integrating by parts twice yields \eqref{orthoha} for $\psi\in C^\infty_0(\Omega)$, and hence for all $\psi\in H^2_0(\Omega)$ by
a density argument. Conversely, let $\psi \in H^2(\Omega)\cap H_0^1(\Omega)$
be orthogonal to all harmonic functions. Then, 
the vector field $\nabla\psi$ has divergence in $L^2(\Omega)$. Hence, its normal trace at the boundary
 $\frac{\partial\psi}{\partial n}\in H^{-1/2}(\partial\Omega)$ is
 in fact a well defined element of $L^2(\partial\Omega)$.
If $h\in H^1(\Omega)$ is harmonic,
\[
\int_{\partial\Omega}h\frac{\partial\psi}{\partial n}\,d\mathcal{H}^1 = \int_\Omega \nabla h \cdot \nabla \psi\,dx = 0\,,
\]
where we also used, in the first integration by parts, that $\Delta\psi$
is orthogonal to $h$ and, in the second one, that the boundary
trace of $\psi $ is zero. By the solvability of the Dirichlet problem for the Laplace equation for all boundary values in the trace space $H^{1/2}(\partial\Omega)$,
we deduce that the boundary trace $\frac{\partial\psi}{\partial n}$ 
is orthogonal to all elements of a dense subset
of $L^2(\partial\Omega)$ and hence must be zero.
\end{proof}

\subsection*{Proof of Proposition~\ref{prop0.1}}
We split the proof into steps.
\subsubsection*{From vortices to streams}
Let $w\in H^1(\Omega)$ be a weak solution of \eqref{vortex-1}. We let $\psi\in H^1_0(\Omega)$ be a weak solution of Poisson equation $\Delta\psi=w $.
Then in fact $\psi \in H^2(\Omega)$, by elliptic regularity~\cite[Theorem~8.12]{GT}.
By Lemma~\ref{lemma-orto},
the orthogonality of $w$ against harmonic functions implies
that $\psi\in H_0^2(\Omega)$, too. Then,
for every $\xi\in H^2_0(\Omega)$, with
$\xi\perp h $ in $L^2(\Omega)$ for all 
harmonic $h\in H^1(\Omega)$,  we have
\[
\begin{split}
\nu\int_\Omega \Delta\psi\,\Delta\xi\,dx
=\nu\int_\Omega w \Delta\xi\,dx =
-\nu\int_\Omega \nabla w\cdot\nabla\xi\,dx
=-\big\langle\nabla\times f\mathbin,\xi\big\rangle
=\int_\Omega\nabla\varphi\cdot\nabla\xi\,dx\,,
\end{split}
\]
where we also used that
 $\varphi\in H^1_0(\Omega)$ and $\Delta \varphi=\nabla\times f$.
Also, for all harmonic $h\in H^1(\Omega)$ 
\[
	\nu\int_\Omega \Delta\psi\,\Delta h\,dx=
	\int_\Omega\nabla\varphi\cdot\nabla h\,dx=0\,.
\]
Since we can write any element of $H^2_0(\Omega)$
in the form $\phi=\xi+h$, the equations sum up to
\eqref{psiweak}. Then, $\psi$ is
a weak solution of \eqref{corrente-1}.

\subsubsection*{From streams
to vortices}
We let $\psi\in H^2_0(\Omega)$ be a weak solution of \eqref{corrente-1}. By
\eqref{psiweak},
setting $w = \Delta \psi$ defines an element of $H^1(\Omega)$. By Lemma~\ref{lemma-orto}, $w$ is orthogonal to all $H^1(\Omega)$ harmonic functions. 
By \eqref{varphiASS}
we also have
\[
\nu\int_\Omega\nabla w\cdot\nabla\phi\,dx-\big\langle\nabla\times f\mathbin{,}\phi\big\rangle
=\nu \int_\Omega\nabla w\cdot\nabla\phi\,dx+
\int_\Omega\nabla\varphi\cdot\nabla\phi\,dx
\] 
where $\langle\cdot\mathbin{,}\cdot\rangle $ denotes the pairing between $H^1_0(\Omega)$ and its dual,
and $w = \Delta \psi$ implies
\[
\nu \int_\Omega\nabla w\cdot\nabla\phi\,dx+
\int_\Omega\nabla\varphi{\cdot}\nabla\phi\,dx
= -\nu\int_\Omega\Delta\psi\Delta \phi\,dx + \int_\Omega\nabla\varphi\cdot\nabla\phi\,dx\,,
\quad\text{for $\phi\in C^\infty_0(\Omega)$,}
\]
with no boundary term arising from partial integration. 
For every $\xi \in H^1(\Omega)$ orthogonal in $L^2(\Omega)$ to harmonic functions, by a density argument we then obtain \eqref{wweak}.
 Thus,
$w$ is a weak solution of \eqref{vortex-1}.

\subsubsection*{The last part of the statement}
We let $\Omega'\subset\Omega$ be an open
set and we let $\xi\in C^\infty_0(\Omega)$ with support contained in $\Omega'$. Then, we take
a weak solution 
$\psi\in H^2_0(\Omega)$ of \eqref{corrente-1}
and we let $\phi\in C^\infty_0(\Omega)$ be the solution
of  $\Delta \phi =\xi$ in  $H^1_0(\Omega')$.
Thus, we can integrate by
parts in \eqref{psiweak} and by doing so we get
\[
	\nu \int_\Omega \Delta\psi\,\xi\,dx
	-\int_\Omega \varphi\,\xi\,dx=0\,.
\]
As $\xi$ was arbitrary, it follows that \eqref{h} defines
a harmonic function. By differentiating in \eqref{h} we arrive at
\begin{equation}
\label{perph-weyl}
	\nabla^\perp h = \nu \Delta (\nabla^\perp \psi)
	+\nabla^\perp \varphi
	\,.
\end{equation}

Now, we let $u$ be defined as in
\eqref{perp}. Clearly, $u\in H^1_0(\Omega;\mathbb R^2)$.  Then, the laplacian in \eqref{perph-weyl}
is acting on $-u$. As for the last term,
it equals $-f$ up to the gradient of a
scalar function: indeed, we have
\[
\nabla\times (\nabla^\perp\varphi+f) =- \Delta\varphi +\nabla\times f=0
\]
in the distributional sense in $\Omega$, and $\Omega$ is simply connected. 
Summing up, the right hand side of \eqref{perph-weyl} is $-\nu\Delta u-f + \nabla \tilde q$, for an appropriate $\tilde q\in H^1(\Omega)$.
Since $\Omega$ is simply connected,
the harmonic function  $h$ admits a harmonic conjugate $q$, i.e., for the left hand side of \eqref{perph-weyl}
we have $\nabla^\perp h = \nabla q$ for a suitable $H^1(\Omega)$ harmonic function $q$. By setting
$p=q-\tilde  q$ we have
\begin{equation}
\label{ptwise}
	\nabla p =\nu \Delta u+ f\,,
\end{equation}
in the sense of distributions in $\Omega$. Since
 $u \in H^1_0(\Omega;\mathbb R^2) $, with $\nabla\cdot u=0$
 (as $\psi\in H^2_0(\Omega)$ and \eqref{perp} holds),
 and $p\in H^1(\Omega)$, by construction, it follows that $u$ is a weak solution of \eqref{Re-zero}.

Conversely, if 
 $u \in H^1_0(\Omega;\mathbb R^2) $, with $\nabla\cdot u=0$, is a weak solution of \eqref{Re-zero} then by
classical interior estimates~\cite[Chap.~3]{L} for  \eqref{uweak}, Eq.\ \eqref{ptwise} holds in the
classical sense. By the uniqueness for the Dirichlet problem
\eqref{varphiASS}, we know that $f=-\nabla^\perp \varphi$. Then, with \eqref{perp} in force, from \eqref{ptwise} we arrive at \eqref{CR}, as desired.
\qed

\section{Overdetermined eigenvalue problems}

\subsection*{Proof of Theorem~\ref{prop0.2}}
The overdermined problems
\eqref{corrente} and \eqref{vortex}
are equivalent because, 
by Proposition~\ref{prop0.1},
so are 
 the problems
\eqref{corrente-1} and \eqref{vortex-1}.

Then, we assume that $\Omega$ is simply connected, 
$\lambda>0$ and $u\in H^1_0(\Omega\mathbin;\mathbb R^2)$ solves  \eqref{Re-zero}. Afterwards, we let $\psi\in H^2_0(\Omega)$ and $w\in H^1(\Omega)$,
orthogonal in $L^2(\Omega)$ to all $H^1(\Omega)$ harmonic functions,
be corresponding solutions of of \eqref{corrente-1} and \eqref{vortex-1}.

By \eqref{varphiASS}, the function \eqref{h} is the harmonic extension of the boundary trace of $w$. By the maximum principle, the latter is constant if and only if so is $h$ in the interior. Therefore, in view of Eq.\ \eqref{CR} in Proposition~\ref{prop0.1},
conditions \eqref{corrente-2} and \eqref{vortex-2}  are equivalent to \eqref{Re-zero-plus}.\qed

\subsection*{Proof of Theorem~\ref{thm1}}
Let $\lambda>0$. By applying Theorem~\ref{prop0.2} to the case $f=\nu^{-1}\lambda u$, the eigenvalue problems
\eqref{1}, \eqref{2}, and \eqref{3} are equivalent and so are the corresponding overdetermined problems
\begin{equation}\label{eigentryptich}
\begin{minipage}{.25\textwidth}
\centering
\[
\begin{cases}
\Delta^2\psi+\lambda\Delta\psi=0\,, & \\
\psi=0\,, \text{ on $\partial\Omega$,} & \,\\
\frac{\partial\psi}{\partial n}=0\,,\text{ on $\partial\Omega$,}& \\
\Delta\psi = {\rm const} \,,\text{ on $\partial\Omega$,} &
\end{cases}
\]
\end{minipage}
\begin{minipage}{.25\textwidth}
\centering
\[
\begin{cases}
\Delta u+ \lambda u=\nabla p\,, & \\
\nabla\cdot u=0\,,  & \\
u=0\,,\text{ on $\partial\Omega$,}&\\
\frac{\partial p}{\partial n} =0 \,, \text{ on $\partial\Omega$,} 
\end{cases}
\]
\end{minipage}
\begin{minipage}{.3\textwidth}
\centering
\[
\begin{cases}
\Delta w+\lambda w=0\,, & \\
\int_\Omega w\,h\,dx=0\,,  & \\
\quad \text{ for all harmonic $h\in H^1(\Omega)$,}&\\
w = {\rm const}\,,\text{ on $\partial\Omega$.} & 
\end{cases}
\]
\end{minipage}
\end{equation}

Now we prove that the problem to the right is equivalent to \eqref{5}.
To do so,
we let
\begin{equation}
\label{weakshi}
w \in H^1(\Omega)\qquad\text{and}\quad	
\int_\Omega \nabla w\cdot\nabla \phi\,dx=
\lambda \int_\Omega  w\,\phi\,dx\,,
	\quad\text{for all $\phi\in H^1(\Omega)$,}
\end{equation}
and we assume that
$w$ has a constant boundary trace. If $h\in H^1(\Omega)$ and $\Delta h=0$ then
\begin{equation}
\label{weakshi2}
\int_\Omega \nabla h\cdot\nabla w\,dx = \int_{\partial\Omega}  w\frac{\partial h}{\partial n}\,d\mathcal{H}^1= {\rm const}\,{\cdot}\!\!\int_\Omega\Delta h\,dx=0\,,
\end{equation}
where in second and in the third equality we used that $h$ is harmonic. By pairing \eqref{weakshi} and \eqref{weakshi2} 
we see that $w$ solves the problem to the right in \eqref{eigentryptich}. 

For the converse, we let $w\in H^1(\Omega)$
be a solution of the
problem to the right in \eqref{eigentryptich}
and
we fix $\phi \in H^1(\Omega)$,
which we can write in the form $\phi=\xi+h$
with $\xi\in H^1(\Omega)$ orthogonal in $L^2(\Omega)$ to all 
harmonic $H^1(\Omega)$ functions, and $h\in H^1(\Omega)$ harmonic.
By assumption
we have
\[
\int_\Omega \nabla w\cdot \nabla\xi\,dx =\lambda \int_\Omega w\,\xi\,dx\,.
\]
In fact, we also have
\[
\int_\Omega \nabla w\cdot \nabla h\,dx =\lambda \int_\Omega w\,h\,dx\,,
\]
because both the integrals in the latter equal zero:
the first one
by \eqref{weakshi2} and the second one by construction.
Since $\phi\in H^1(\Omega)$ was arbitrary, by summing up 
it follows that \eqref{weakshi} holds, as desired.
For the last statement, we refer to the proof of~\cite[Theorem~11.3.7]{H}.\qed

\begin{rmk}
We have the equivalence between the three overdetermined vorticity problems
\[
\begin{minipage}{.25\textwidth}
\centering
\[
\begin{cases}
\Delta w+\lambda w=0\,, & \\
w = {\rm const}\,,\text{ on $\partial\Omega$,} & \\
\frac{\partial w}{\partial n}=0\,, 
\quad \text{on $\partial\Omega$,}&
\end{cases}
\]
\end{minipage}
\begin{minipage}{.25\textwidth}
\centering
\[
\begin{cases}
\Delta w+\lambda w=0\,, & \\
w\in \mathcal{J},  & \\
\frac{\partial w}{\partial n}=0\,,\text{ on $\partial\Omega$,} & 
\end{cases}
\]

\end{minipage}
\begin{minipage}{.3\textwidth}
\centering
\[
\begin{cases}
\Delta w+\lambda w=0\,, & \\
w\in \mathcal{J},  & \\
w = {\rm const}\,,\text{ on $\partial\Omega$.} & 
\end{cases}
\]
\end{minipage}
\]
Here, $\mathcal{J}$ is comprised of all 
$H^1(\Omega)$ functions
that are orthogonal in $L^2(\Omega)$ to  harmonic functions.
While proving Theorem~\ref{thm1}, we have
seen that the left and the right hand side are equivalent. The equivalence of the latter and of the problem midway follows by a similar argument and by applying a boundary DuBois-Reymond type lemma in local coordinates.
\end{rmk}

We turn our attention to the two-dimensional
incompressible Navier-Stokes system \eqref{4}.
\begin{rmk}\label{wprNS}
The well-posedness of the Cauchy problem for \eqref{4}
is due to~\cite{L0} 
under
assumptions covering the case of conservative forces and of divergence-free $H^1(\Omega;\mathbb R^2)$ initial data.
Moreover, when $f=0$ and the solution $v$
of \eqref{4} takes the
form \eqref{cell} we have in particular that $v(x,0)=u$.
Since by elliptic regularity the latter belongs to $H^2(\Omega)$, the function $v$ is in fact a classical solution of \eqref{4},
see~\cite[Theorem~7 in Chap.~6]{L}.
\end{rmk}

In order to prove Theorem~\ref{prop1},
we need the following lemmas, in which
we are implicitly assuming that $f=0$ and that \eqref{cell}
holds.

\begin{lemma}\label{fromu}
If $u$ solves \eqref{2} and $\nabla p=0$, then $v$ solves \eqref{4}. 
\end{lemma}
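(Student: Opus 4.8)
The plan is to verify directly that the ansatz $v(x,t)=e^{-\nu\lambda t}u(x)$, together with a suitable time-dependent pressure, satisfies the incompressible Navier--Stokes system \eqref{4} with $f=0$. First I would record the three easy structural facts: $v$ inherits $\nabla\cdot v=0$ in $\Omega$ and $v=0$ on $\partial\Omega$ directly from $u$, since the time factor $e^{-\nu\lambda t}$ is a scalar; and the time derivative is $\partial_t v=-\nu\lambda e^{-\nu\lambda t}u$. The only equation requiring work is the momentum balance, so the bulk of the argument is to evaluate $\nu\Delta v-\partial_t v$ and to handle the nonlinear term $(v\cdot\nabla)v$.

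Next I would use the eigenvalue equation \eqref{2}. Rescaling Laplacians, $u$ solves $\nu\Delta u+\nu\lambda u=\nu\nabla p$ (after absorbing the factor $\nu$, as noted in the excerpt the viscosity may be normalised), i.e.\ $\nu\Delta u=-\nu\lambda u+\nabla(\nu p)$. Multiplying through by $e^{-\nu\lambda t}$ gives $\nu\Delta v=-\nu\lambda e^{-\nu\lambda t}u+e^{-\nu\lambda t}\nabla(\nu p)=\partial_t v+\nabla(\nu e^{-\nu\lambda t}p)$. Hence
\begin{equation*}
\partial_t v=\nu\Delta v-\nabla\big(\nu e^{-\nu\lambda t}p\big).
\end{equation*}
Now comes the point where the hypothesis $\nabla p=0$ is essential: if $\nabla p=0$ in $\Omega$ then $\nabla(\nu e^{-\nu\lambda t}p)=0$ as well, so in fact $\partial_t v=\nu\Delta v$ and $v$ is, pointwise, a caloric-type profile with no pressure contribution at all from this term.

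It then remains to absorb the convective term $(v\cdot\nabla)v$ into a pressure gradient. The standard device is to note that for a divergence-free field one has the identity $(v\cdot\nabla)v=\tfrac12\nabla|v|^2+(\nabla\times v)\times v$ in two dimensions, but more to the point: since $v=e^{-\nu\lambda t}u$ with $u$ an eigenfunction of \eqref{2} with $\nabla p=0$, the vorticity $w=\nabla\times u$ satisfies $\Delta w+\lambda w=0$ and is orthogonal to harmonics, and one checks that $(v\cdot\nabla)v$ is curl-free. Indeed $\nabla\times\big((v\cdot\nabla)v\big)=(v\cdot\nabla)(\nabla\times v)$ for divergence-free $v$, and this is the transport term in \eqref{vort-ev}; but from $v=e^{-\nu\lambda t}u$ and $\nu\Delta v=\partial_t v$ we get $\partial_t(\nabla\times v)=\nu\Delta(\nabla\times v)$, so comparing with the vorticity equation forces $(v\cdot\nabla)(\nabla\times v)=0$. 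Therefore $(v\cdot\nabla)v=\nabla\pi$ for some scalar $\pi=\pi(x,t)$ (using simple connectedness), and setting the Navier--Stokes pressure to be $P=\nu e^{-\nu\lambda t}p-\pi=-\pi$ makes \eqref{4} hold identically.

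The main obstacle I expect is the justification that the convective term is a gradient: one must argue carefully that $\nabla\times\big((v\cdot\nabla)v\big)=0$, which (as just sketched) reduces to showing $(v\cdot\nabla)\omega=0$ where $\omega=\nabla\times v$. This is precisely the content of the displayed system just before Theorem~\ref{prop1} in the excerpt, but there it is derived \emph{assuming} \eqref{vortex-2}; here I would instead derive it from $\nabla p=0$ via Theorem~\ref{prop1}'s equivalences (or directly), so some care is needed not to argue in a circle. A clean route is: $\nabla p=0$ implies, by Theorem~\ref{prop0.2} applied with $f=\nu^{-1}\lambda u$, that the boundary vorticity is constant, hence $w$ solves Schiffer's problem \eqref{5}; then the heat-type evolution $\omega(x,t)=e^{-\nu\lambda t}w(x)$ combined with $\Delta w=-\lambda w$ gives $\nu\Delta\omega-\partial_t\omega=0$, and subtracting this from the (as yet unverified) vorticity equation \eqref{vort-ev} is exactly the assertion $(v\cdot\nabla)\omega=0$ — except that \eqref{vort-ev} is a consequence of \eqref{4}, which is what we are trying to prove. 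So in fact the honest argument is the forward one: define $P$ by $\nabla P=\nu\Delta v-\partial_t v-(v\cdot\nabla)v$ and show the right-hand side is curl-free by the direct computation above (using $\Delta w+\lambda w=0$, $\partial_t w=-\nu\lambda w$, and $\nabla\cdot v=0$), which needs only elliptic/parabolic regularity of $u$ (it is in $H^2$, indeed smooth, by Remark~\ref{wprNS}) to make the manipulations rigorous.
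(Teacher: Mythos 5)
Your overall strategy coincides with the paper's: verify the linear part directly, observe that for divergence-free $v$ in two dimensions the curl of the momentum residual reduces to the vorticity transport term $(v\cdot\nabla)\omega$, show that this term vanishes, and then integrate the curl-free residual over the simply connected domain to produce the Navier--Stokes pressure. You also correctly flag the circularity danger in deriving $(v\cdot\nabla)\omega=0$ from the vorticity equation \eqref{vort-ev}. However, the non-circular argument you finally settle on is incomplete: you assert that $\nabla\times\bigl(\nu\Delta v-\partial_t v-(v\cdot\nabla)v\bigr)=0$ follows ``using $\Delta w+\lambda w=0$, $\partial_t w=-\nu\lambda w$, and $\nabla\cdot v=0$''. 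Those facts dispose of $\nu\Delta\omega-\partial_t\omega$, but they say nothing about $(v\cdot\nabla)\omega=e^{-2\nu\lambda t}(u\cdot\nabla)w$: they hold for \emph{every} buckling eigenfunction (e.g.\ on a square), for which $(u\cdot\nabla)w$ is in general nonzero. The hypothesis $\nabla p=0$ must be invoked a second time at precisely this point, and your sketch never does so.

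The missing step is the paper's one-line computation. Writing $u=-\nabla^\perp\psi$ and $w=\Delta\psi=\nabla\times u$, one has $\Delta u=-\nabla^\perp w$, so the momentum equation \eqref{2} gives $\nabla^\perp w=\lambda u-\nabla p$, and hence
\[
(u\cdot\nabla)w=\nabla\psi\cdot\nabla^\perp w=(\lambda u-\nabla p)\cdot\nabla\psi=-\nabla p\cdot\nabla\psi\,,
\]
because $u\cdot\nabla\psi=-\nabla^\perp\psi\cdot\nabla\psi=0$. Only now does $\nabla p=0$ kill the transport term; equivalently, $\nabla p=0$ forces the harmonic function \eqref{h} to be constant, so $\nabla w$ is parallel to $\nabla\psi$ and hence orthogonal to the streamlines. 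With this identity inserted, your argument closes and is essentially the paper's proof; the detour through Schiffer's problem \eqref{5} and Theorem~\ref{prop0.2} that you sketch as the ``clean route'' is unnecessary, since the interior identity above uses only \eqref{2} and $\nabla p=0$.
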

\begin{lemma}\label{fromv}
If $\Omega\in C^{2,\alpha}$ and $v$ solves \eqref{4}, then $u$ solves \eqref{2} with $\frac{\partial p}{\partial n}=0$.
\end{lemma}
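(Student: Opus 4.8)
The plan is to substitute the \emph{ansatz} \eqref{cell} into \eqref{4} with $f=0$ and to separate variables, using that $t\mapsto e^{-\nu\lambda t}$ and $t\mapsto e^{-2\nu\lambda t}$ are linearly independent. Put $w=\nabla\times u$, so that the vorticity of $v$ is $\omega(x,t)=e^{-\nu\lambda t}w(x)$; since $v$ solves \eqref{4} it solves \eqref{vort-ev}, and inserting $\omega$ in the first line of \eqref{vort-ev} and dividing by $e^{-\nu\lambda t}$ leaves $e^{-\nu\lambda t}(u\cdot\nabla)w=\nu(\Delta w+\lambda w)$ in $\Omega$ for all $t>0$. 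For fixed $x$ the right-hand side does not depend on $t$ while the left-hand side is a constant multiple of $e^{-\nu\lambda t}$, so both must vanish, giving
\begin{equation}
\label{plan-sep}
\Delta w+\lambda w=0\qquad\text{and}\qquad (u\cdot\nabla)w=0\qquad\text{in }\Omega .
\end{equation}
(Equivalently, \eqref{plan-sep} follows by taking the curl of the momentum balance in \eqref{4} along \eqref{cell}, recalling $\nabla\cdot u=0$.) From $v$ one also inherits $\nabla\cdot u=0$, the no-slip condition $u=0$ on $\partial\Omega$, and, testing the orthogonality in \eqref{vort-ev} against products $g(t)k(x)$ with $g\in L^2(0,\infty)$ and $k\in H^1(\Omega)$ harmonic, that $w$ is orthogonal in $L^2(\Omega)$ to every harmonic $h\in H^1(\Omega)$.

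Next I would record that $u$ solves \eqref{2}: the first identity in \eqref{plan-sep} says $\nabla\times(\Delta u+\lambda u)=0$, hence, $\Omega$ being simply connected, $\Delta u+\lambda u=\nabla p$ for some scalar $p$ (harmonic, by taking the divergence), and together with $\nabla\cdot u=0$ and $u|_{\partial\Omega}=0$ this is \eqref{2}. (Equivalently, $w$ solves \eqref{3}, and the conclusion follows from Proposition~\ref{prop0.1} applied with $\nu=1$ and $f=\lambda u$, which verifies \eqref{fASS} since $\nabla\cdot(\lambda u)=0$, $\nabla\times(\lambda u)=\lambda w\in L^2(\Omega)\subset H^{-1}(\Omega)$ and $\lambda u\cdot n=0$ on $\partial\Omega$.) Since $\Omega\in C^{2,\alpha}$, elliptic regularity for \eqref{2} gives $u\in C^{2,\alpha}(\overline\Omega)$, hence $w\in C^{1,\alpha}(\overline\Omega)$, and the associated stream function $\psi\in H^2_0(\Omega)$ — with $u=-\nabla^\perp\psi$ and $w=\Delta\psi$ — lies in $C^{3,\alpha}(\overline\Omega)$.

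The crux is to deduce from the transport identity $(u\cdot\nabla)w=0$ that $w$ is constant on $\partial\Omega$. In terms of $\psi$ it reads $\nabla^\perp\psi\cdot\nabla w=0$, i.e.\ $\nabla\psi$ and $\nabla w$ are everywhere parallel, so $w$ is constant along each connected component of a level curve of $\psi$ on which $\nabla\psi\ne0$. Since $\psi\in H^2_0(\Omega)$, the boundary $\partial\Omega$ is contained in the level set $\{\psi=0\}$ and, $\Omega$ being simply connected, $\partial\Omega$ is connected; but $\nabla\psi=0$ on $\partial\Omega$, so the previous remark cannot be applied directly and must be run on the nearby level curves $\{\psi=\varepsilon\}$ instead. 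Near a point of $\partial\Omega$ where $w\ne0$ one has $\partial^2\psi/\partial n^2=w\ne0$, so in boundary normal coordinates $\psi\approx\tfrac12(\partial^2_{nn}\psi)\,\mathrm{dist}(\cdot,\partial\Omega)^2$, the sets $\{\psi=\varepsilon\}$ (with $\varepsilon$ of the sign of $\partial^2_{nn}\psi$) are $C^1$ curves with $\nabla\psi\ne0$ converging to $\partial\Omega$ as $\varepsilon\to0$, and passing to the limit using the continuity of $w$ on $\overline\Omega$ shows that $w$ is locally constant on $\{w\ne0\}\cap\partial\Omega$. An elementary connectedness argument then finishes: on a connected component of $\{w\ne0\}\cap\partial\Omega$, $w$ is a constant $c\ne0$, and $w=c$ persists on the closure of that component, which — unless the component is all of $\partial\Omega$ — must meet a zero of $w$, a contradiction; hence $w$ is constant on $\partial\Omega$ (with value $0$ if $\{w\ne0\}\cap\partial\Omega$ is empty).

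Once $w$ is constant on $\partial\Omega$, i.e.\ \eqref{vortex-2} holds (equivalently \eqref{corrente-2}), Theorem~\ref{prop0.2} — again with $f=\lambda u$ — turns this into \eqref{Re-zero-plus}, that is $\nabla p=0$ in $\Omega$; in particular $\partial p/\partial n=0$ on $\partial\Omega$, as required. I expect the passage in the third paragraph to be the main obstacle: the streamline argument degenerates on $\partial\Omega$ because $u$ vanishes there, so it has to be recovered by approximation from the interior, which is precisely where the quantitative control of the level-set geometry near the boundary — and hence the $C^{2,\alpha}$ hypothesis — is used.
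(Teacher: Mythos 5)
Your proposal is correct, and after the common starting point it diverges from the paper's argument in an instructive way. Both proofs begin by inserting the ansatz \eqref{cell} into the vorticity equation and separating the variables to obtain $(v\cdot\nabla)\omega=0$, i.e.\ $(u\cdot\nabla)w=0$ in $\Omega$. The paper then rewrites this, via the momentum equation, as $\nabla\psi\cdot\nabla p=0$, extends the identity to $\overline\Omega$ by regularity on the $C^{2,\alpha}$ domain, and reads off $\frac{\partial p}{\partial n}=0$ from the fact that $\partial\Omega$ is a level set of $\psi$ --- a two-line ending that passes over the degeneracy $\nabla\psi\equiv 0$ on $\partial\Omega$, where the orthogonality relation is vacuous. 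You instead keep the identity in the form $\nabla^\perp\psi\cdot\nabla w=0$, deduce that $w$ is constant along streamlines, and recover the constancy of $w$ on $\partial\Omega$ by sliding along the interior level curves $\{\psi=\varepsilon\}$, using $\partial^2_{nn}\psi=w$ on $\partial\Omega$ to control their geometry, followed by a connectedness argument; you then invoke Theorem~\ref{prop0.2} to convert \eqref{vortex-2} into $\nabla p=0$, which is stronger than, and (since $p$ is harmonic) equivalent to, the Neumann condition. Your route is longer, but it supplies precisely the interior-limit argument that the paper's phrase ``the boundary is a level set of $\psi$'' needs in order to be watertight; the same degeneracy could alternatively be handled through the boundary identity $\frac{\partial p}{\partial n}=-\partial_\tau w$, which again reduces matters to the constancy of $w$ on $\partial\Omega$. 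Two minor points: the claim $\psi\in C^{3,\alpha}(\overline\Omega)$ overshoots what a $C^{2,\alpha}$ boundary delivers (Schauder theory gives $\psi\in C^{2,\alpha}(\overline\Omega)$, which is all your Taylor expansion in normal coordinates requires); and you use the simple connectedness of $\Omega$ (to produce $p$ and to make $\partial\Omega$ connected), which is not in the literal statement of the lemma but is part of the standing hypotheses of Theorem~\ref{prop1} in which the lemma is applied --- the paper sidesteps the first of these uses by taking $u$ to be a given solution of \eqref{2} from the outset, so that only the boundary condition on $p$ remains to be proved.
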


\begin{proof}[Proof of Lemma~\ref{fromu}]
Let $u$ solve \eqref{2}, with $\nabla p=0$, and let
 $w=\nabla\times u$. Then, \eqref{cell}
is the solution
of the incompressible heat PDE system
\begin{equation}
\label{heatsys}
\begin{cases}
	\partial_tv = \nu \Delta v & \qquad \text{in } Q\\
	\nabla\cdot v=0 & \qquad \text{in } Q\\
	v=0 &\qquad \text{in } \partial_LQ \,,
\end{cases}
\end{equation}
where $Q=\Omega\times(0,+\infty)$ 
and $\partial_LQ = \partial\Omega\times(0,+\infty)$, with
$v(\cdot,0)=u$.

By arguing as done to prove Proposition~\ref{prop0.1}, we see that the vorticity $\omega(x,t)= e^{-\nu\lambda t} w(x)$
belongs to $L^2(0,\infty;\mathcal{J})\cap H^1(0,\infty;L^2(\Omega))$, with $\mathcal{J}$ being the $L^2(\Omega)$-orthogonal
complement in $H^1(\Omega)$ to the
space 
of $H^1(\Omega)$
harmonic functions, and it solves the initial value problem \eqref{vort-ev} with $\omega(\cdot,t)=w$.

By interior elliptic estimates, $u$ is smooth, 
and we have
$\nabla^\perp w=-\Delta u$. Hence and from \eqref{2} we get $(u\cdot\nabla)w=(\nabla p-\lambda u)\cdot\nabla \psi$,
where $u=-\nabla^\perp\psi$ and $\psi$ solves \eqref{1}. Since $\nabla p =0$ and $u\cdot \nabla \psi=0$, 
we have $(u\cdot\nabla)w=0$ in $\Omega$ and that implies  \begin{equation}
\label{nostretch}
(v\cdot \nabla) \omega=0\,,\qquad \text{in } Q\,.
\end{equation}
Pairing \eqref{vort-ev} and \eqref{nostretch}, and recalling
$\omega=\nabla\times u$,
we deduce the existence of a pressure 
$p$ (uniquely defined up to additive constants) 
for which \eqref{4} holds with $f=0$.
\end{proof}

\begin{proof}[Proof of Lemma~\ref{fromv}]
We let $u$ be a weak solution of \eqref{2} with pressure $ p$
and we assume \eqref{cell}
to define a solution of Naver-Stokes system
\eqref{4}, with $f=0$.
Then the equation $\partial_t\omega+(v\cdot\nabla)\omega=\nu\Delta\omega$ holds for
$\omega=e^{-\nu\lambda t}w$, where $w=\nabla\times u$. Since we also have $\partial_t\omega=\nu\Delta\omega$,
it follows that $-e^{-2\nu\lambda t}\nabla\psi\cdot\nabla p=e^{-2\nu\lambda t}(u\cdot\nabla)w=(v\cdot \nabla)\omega=0$ for all $t>0$, where $\psi\in H^2_0(\Omega)$
is such that $u=-\nabla^\perp u$.  If $\Omega$ is of class $C^{2,\alpha}$ then by~\cite[Theorem~5, Chap.~3]{L}
the equations hold at the boundary, which is a level set of $\psi$. Thus, $\frac{\partial p}{\partial n}=0$.
\end{proof}
\subsection*{Proof of Theorem~\ref{prop1}} Let $p$ be the pressure associated with $u$. We recall that $p$ is unique
up to an additive constant and that it is a harmonic function. Hence, the first statement follows by Theorem~\ref{prop0.2}. Then, we fix $\nu>0 $, we define $v$ as in \eqref{cell}. The
first implication in the last statement follows by Lemma~\ref{fromu}. If in addition $\Omega$ is smooth, the
converse implication also holds thanks to Lemma~\ref{fromv}. \qed

\subsection*{The ground states}
We conclude by an elementary remark about the relation
between the the first eigenvalues
in the buckling problem and in Stokes system.
\begin{proposition}\label{propSC}
\(
\lambda_1^B(\Omega) 
\ge\lambda_1^{S}(\Omega)
\),
with equality if $\Omega\subset\mathbb R^2$ is simply connected.
\end{proposition}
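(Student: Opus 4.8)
The plan is to use the stream-function correspondence $\psi\mapsto u:=-\nabla^\perp\psi$, as in \eqref{perp}, which sends the admissible class for $\lambda_1^B(\Omega)$ into that for $\lambda_1^S(\Omega)$ while preserving the Rayleigh quotient, and to observe that this map is onto $V_\sigma(\Omega)$ precisely when $\Omega$ is simply connected.

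First I would record that, for $\psi\in H^2_0(\Omega)$ and $u=-\nabla^\perp\psi=(\partial_{x_2}\psi\mathbin,-\partial_{x_1}\psi)$, one has $u\in H^1_0(\Omega;\mathbb R^2)$, $\nabla\cdot u=0$, and the pointwise identities $|u|^2=|\nabla\psi|^2$ and $|\nabla u|^2=\sum_{i,j=1}^2(\partial_{x_i}\partial_{x_j}\psi)^2$, the latter being immediate from writing out the entries of $\nabla u$. The one analytic ingredient is the integrated identity $\int_\Omega\sum_{i,j}(\partial_{x_i}\partial_{x_j}\psi)^2\,dx=\int_\Omega(\Delta\psi)^2\,dx$ for $\psi\in H^2_0(\Omega)$; I would prove it first for $\psi\in C^\infty_0(\Omega)$ by integrating by parts twice, which gives $\int_\Omega(\partial_{x_i}\partial_{x_j}\psi)^2\,dx=\int_\Omega \partial_{x_i}^2\psi\,\partial_{x_j}^2\psi\,dx$ with no boundary contributions, and then pass to $H^2_0(\Omega)$ by density. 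Together these show that the buckling quotient of $\psi$ equals the Stokes quotient of $u=-\nabla^\perp\psi$; since $u\in V_\sigma(\Omega)$, minimising over $\psi\in H^2_0(\Omega)$ yields $\lambda_1^S(\Omega)\le\lambda_1^B(\Omega)$.

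For the reverse inequality, under the hypothesis that $\Omega$ is simply connected, I would run the correspondence backwards: given an admissible $u\in V_\sigma(\Omega)$ for $\lambda_1^S(\Omega)$, simple connectivity of $\Omega$ furnishes a stream function $\psi$ with $u=-\nabla^\perp\psi$, unique up to an additive constant, and $\psi\in H^2(\Omega)$ because $\nabla\psi$ is a rotation of the $H^1$ field $u$. The condition $u=0$ on $\partial\Omega$ forces $\nabla\psi=0$ on $\partial\Omega$ and $\psi$ to be locally constant along $\partial\Omega$; since $\partial\Omega$ is connected (a consequence of simple connectivity of the bounded planar Lipschitz set $\Omega$), $\psi$ equals a single constant there, and normalising it to zero gives $\psi\in H^2_0(\Omega)$. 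The identities of the previous step, read in reverse, then produce for each admissible $u$ a competitor $\psi$ with the same quotient, whence $\lambda_1^B(\Omega)\le\lambda_1^S(\Omega)$, and combined with the first part this gives equality.

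I expect the only genuine subtlety to be this last reversal, namely the verification that the stream function of an $H^1_0$ divergence-free field on a simply connected Lipschitz domain actually lies in $H^2_0(\Omega)$ — that is, that the normalisation making $\psi$ vanish on $\partial\Omega$ is available. This is where simple connectivity is really used; it is standard and can be taken from the stream-function construction already invoked in the proof of Proposition~\ref{prop0.1} (see also \cite{G}). The identity $\int_\Omega\sum_{i,j}(\partial_{x_i}\partial_{x_j}\psi)^2\,dx=\int_\Omega(\Delta\psi)^2\,dx$, by contrast, is entirely routine.
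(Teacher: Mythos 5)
Your proof is correct and takes essentially the same route as the paper: the correspondence $u=-\nabla^\perp\psi$ together with the integrated Hessian identity $\int_\Omega\sum_{i,j}(\partial_{x_i}\partial_{x_j}\psi)^2\,dx=\int_\Omega(\Delta\psi)^2\,dx$ for $\psi\in H^2_0(\Omega)$ (the content of Lemma~\ref{byparts}), run in both directions, with simple connectivity used only to invert the map. The sole cosmetic difference is that the paper passes through $C^\infty_0$ approximations of the competitors, whereas you work directly in $H^2_0(\Omega)$ and $V_\sigma(\Omega)$ and justify $\psi\in H^2_0(\Omega)$ via boundary traces — a step you rightly flag as the only subtlety, and which can equivalently be closed by combining the vanishing of the traces of $\psi$ and $\nabla\psi$ with the characterisation in Lemma~\ref{lemma-orto}.
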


For the proof we need the following Lemma.
\begin{lemma}\label{byparts}
Let $\psi\in H^2_0(\Omega)$ and
 $u=\nabla^\perp \psi $. Then $u\in H^1_0(\Omega;\mathbb R^2)$,
with $\nabla\cdot u=0 $, and 
\[
\frac{\displaystyle\int_\Omega|\nabla u|^2\,dx}{\displaystyle\int_\Omega|u|^2\,dx}
=\frac{\displaystyle\int_\Omega(\Delta\psi)^2\,dx}{\displaystyle\int_\Omega|\nabla\psi|^2\,dx}
\]
\end{lemma}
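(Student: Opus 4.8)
The plan is to prove Lemma~\ref{byparts} by a direct integration-by-parts computation, exploiting that $\psi\in H^2_0(\Omega)$ vanishes together with its full gradient on $\partial\Omega$, so that no boundary terms ever appear. First I would record the pointwise identities: writing $u=\nabla^\perp\psi=(\partial_{x_2}\psi,-\partial_{x_1}\psi)$, one has $\nabla\cdot u=\partial_{x_1}\partial_{x_2}\psi-\partial_{x_2}\partial_{x_1}\psi=0$, and $u\in H^1_0(\Omega;\mathbb R^2)$ because $\psi\in H^2_0(\Omega)$ and the components of $u$ are first derivatives of $\psi$, hence lie in $H^1_0(\Omega)$ by definition of $H^2_0$. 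For the numerator of the left-hand ratio, I would compute $|\nabla u|^2=\sum_{i,j}(\partial_{x_i}u^j)^2=(\partial_{x_1}\partial_{x_2}\psi)^2+(\partial_{x_2}\partial_{x_2}\psi)^2+(\partial_{x_1}\partial_{x_1}\psi)^2+(\partial_{x_2}\partial_{x_1}\psi)^2=|D^2\psi|^2$, i.e.\ $|\nabla u|^2$ equals the squared Frobenius norm of the Hessian of $\psi$. For the denominator, $|u|^2=|\nabla\psi|^2$ pointwise, which immediately handles the $L^2$ terms.

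The key step is then the identity $\int_\Omega|D^2\psi|^2\,dx=\int_\Omega(\Delta\psi)^2\,dx$ for $\psi\in H^2_0(\Omega)$. I would establish this first for $\psi\in C^\infty_0(\Omega)$ by integrating by parts twice: $\int_\Omega(\partial_{x_i}\partial_{x_j}\psi)^2\,dx=-\int_\Omega\partial_{x_j}\psi\,\partial_{x_i}\partial_{x_i}\partial_{x_j}\psi\,dx=\int_\Omega\partial_{x_i}\partial_{x_j}\psi\cdot\partial_{x_i}\partial_{x_j}\psi$... more directly, $\int_\Omega(\partial_{x_i}\partial_{x_j}\psi)(\partial_{x_i}\partial_{x_j}\psi)\,dx=\int_\Omega(\partial_{x_i}\partial_{x_i}\psi)(\partial_{x_j}\partial_{x_j}\psi)\,dx$ after moving one derivative across from each factor, and summing over $i,j$ turns the left side into $\int_\Omega|D^2\psi|^2$ and the right side into $\int_\Omega(\Delta\psi)^2$. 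Then I would pass to general $\psi\in H^2_0(\Omega)$ by density of $C^\infty_0(\Omega)$ in $H^2_0(\Omega)$, since both sides are continuous quadratic forms on $H^2(\Omega)$. This is essentially the classical fact that the Hessian seminorm and the Laplacian seminorm coincide on $H^2_0$; it is the one place where the clamped boundary conditions (both $\psi=0$ and $\partial\psi/\partial n=0$, encoded in $H^2_0$) are genuinely used.

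Combining the pieces, $\int_\Omega|\nabla u|^2\,dx=\int_\Omega|D^2\psi|^2\,dx=\int_\Omega(\Delta\psi)^2\,dx$ and $\int_\Omega|u|^2\,dx=\int_\Omega|\nabla\psi|^2\,dx$, which yields the claimed equality of ratios (and both denominators are strictly positive as $\psi$ is not constant unless $\psi\equiv0$). I do not expect a serious obstacle here: the only mild subtlety is making sure the double integration by parts is justified at the level of $C^\infty_0$ functions and that the density argument applies, both of which are routine. With Lemma~\ref{byparts} in hand, Proposition~\ref{propSC} follows by comparing the variational characterizations \eqref{lambda1B} and \eqref{lambda1S}: every competitor $\psi\in H^2_0(\Omega)$ produces a divergence-free competitor $u=\nabla^\perp\psi\in H^1_0(\Omega;\mathbb R^2)$ with the same Rayleigh quotient, giving $\lambda_1^B(\Omega)\ge\lambda_1^S(\Omega)$; and when $\Omega$ is simply connected, conversely every divergence-free $u\in H^1_0(\Omega;\mathbb R^2)$ admits a stream function $\psi\in H^2_0(\Omega)$ with $u=\nabla^\perp\psi$, so the two infima coincide.
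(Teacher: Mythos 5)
Your proof is correct and follows essentially the same route as the paper: the paper's one-line proof invokes precisely the fact that $(\partial_{x_1x_2}^2\psi)^2-\partial_{x_1x_1}^2\psi\,\partial_{x_2x_2}^2\psi$ (i.e.\ minus the Hessian determinant, which is exactly the difference $\tfrac12(|D^2\psi|^2-(\Delta\psi)^2)$) integrates to zero over $\Omega$ for $\psi\in H^2_0(\Omega)$, which is what you establish by double integration by parts on $C^\infty_0(\Omega)$ and density. You have simply written out in full the computation the paper leaves implicit.
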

\begin{proof}
One uses that $\partial_{x_1x_2}^2\psi-\partial_{x_1x_1}^2\psi\partial_{x_2x_2}^2\psi$ 
integrates to zero in $\Omega$.
\end{proof}

\begin{proof}[Proof of Proposition~\ref{propSC}]
We let $\varepsilon>0$. 
Then, we take $\psi\in C^\infty_0(\Omega)$ with
\[
 \lambda_1^B(\Omega)+\varepsilon\ge \frac{\displaystyle\int_\Omega(\Delta\psi)^2\,dx}{\displaystyle\int_\Omega|\nabla\psi|^2\,dx}\,.
 \]
Note that
 $u=\nabla^\perp \psi$
 is  admissible for the definition of $\lambda_1^S(\Omega)$. Then, $\lambda_1^B(\Omega)\ge\lambda_1^S(\Omega)$ by Lemma~\ref{byparts}, because $\varepsilon>0$ is arbitrary.

Now we assume that $\Omega$ is simply connected. Recall that $C^\infty_0(\Omega;\mathbb R^2) $ is dense in $H^1_0(\Omega;\mathbb R^3)$.  Then, by arguing as done above,
we find $u\in C^\infty_0(\Omega;\mathbb R^2)$, with $\nabla\cdot u=0$, such that
\[
 \lambda_1^S(\Omega)+\varepsilon\ge \frac{\displaystyle\int_\Omega|\nabla u|^2\,dx}{\displaystyle\int_\Omega|u|^2\,dx}\,.
\]
Since $\Omega$ is simply connected,
there exists $\psi\in H^2_0(\Omega)$ with $-\nabla^\perp \psi = u$
and we conclude again by Lemma~\ref{byparts},
because $\varepsilon>0$ is arbitary.
\end{proof}

\end{document}